
\documentclass{amsart}
\usepackage{amssymb}
\usepackage{amsfonts}

\setcounter{MaxMatrixCols}{10}

\newtheorem{theorem}{Theorem}[section]
\theoremstyle{plain}

\newtheorem{conjecture}[theorem]{Conjecture}
\newtheorem{corollary}[theorem]{Corollary}

\newtheorem{definition}[theorem]{Definition}
\newtheorem{example}[theorem]{Example}

\newtheorem{lemma}[theorem]{Lemma}
\newtheorem{notation}[theorem]{Notation}

\newtheorem{proposition}[theorem]{Proposition}
\newtheorem{remark}[theorem]{Remark}

\numberwithin{equation}{section}
\input{tcilatex}

\begin{document}
\title[Convolutions of orbital measures]{The absolute continuity of
convolutions of orbital measures in $SO(2n+1)$}
\author{Kathryn E. Hare}
\address{Dept. of Pure Mathematics, University of Waterloo\\
Waterloo, Ont., N2L 3G1, Canada}
\email{kehare@uwaterloo.ca}
\thanks{This research is supported in part by NSERC\ 2016-03719}
\subjclass[2000]{Primary 43A80; Secondary 58C35, 17B22}
\keywords{orbital measure, absolutely continuous measure, compact Lie group}
\thanks{This paper is in final form and no version of it will be submitted
for publication elsewhere.}

\begin{abstract}
Let $G$ be a compact Lie group of Lie type $B_{n},$ such as $SO(2n+1)$. We
characterize the tuples\ $(x_{1},...,x_{L})$ of the elements $x_{j}\in G$
which have the property that the product of their conjugacy classes has
non-empty interior. Equivalently, the convolution product of the orbital
measures supported on their conjugacy classes is absolutely continuous with
respect to Haar measure. The characterization depends on the dimensions of
the largest eigenspaces of each $x_{j}$. Such a characterization was
previously only known for the compact Lie groups of type $A_{n}$.
\end{abstract}

\maketitle

\section{Introduction}

Let $G$ be a compact, connected Lie group with Lie algebra $\mathfrak{g}$.
Given $x\in G$ (or $X\in \mathfrak{g}$) we let $\mu _{x}$ (or $\nu _{X})$
denote the $G$-invariant measure on $G$ (or $\mathfrak{g}$) supported on the
conjugacy class containing $x$ (or adjoint orbit containing $X$). These are
known as orbital measures. It is a classical result due to Ragozin \cite{Ra}
that if the dimension of $G$ non-trivial orbital measures are convolved
together, the resulting measure is absolutely continuous with respect to
Haar measure on $G$ in the first case and with respect to Lebesgue measure
on $\mathfrak{g}$ in the second. Ragozin proved this by using geometric
properties to deduce that the product of dimension of $G$ non-trivial
conjugacy classes (or the sum of $\dim G$ non-trivial adjoint orbits) had
non-empty interior, an equivalent property.

In a series of papers (see \cite{GAFA}-\cite{Adv}, \cite{HJSY}) the author
with various coauthors used tools from harmonic analysis and representation
theory to improve upon Ragozin's result, determining for each $x\in G$ (or $%
X\in \mathfrak{g}$) the minimal integer $L$ such that the convolution of $%
\mu _{x}$ (or $\nu _{X})$ with itself $L$ times was absolutely continuous.
This number $L,$ which depends on $x$ or $X,$ never exceeds $2rankG$. In 
\cite{Wr}, Wright used geometric arguments to extend these results in the
special case of the classical Lie groups and algebras of type $A_{n}$ and
characterized the absolute continuity of $\mu _{x_{1}}\ast \cdot \cdot \cdot
\ast \mu _{x_{L}}$ or $\nu _{X_{1}}\ast \cdot \cdot \cdot \ast \nu _{X_{L}}$
in terms of the dimensions of the largest eigenspaces of the elements $x_{j}$
and $X_{j}$.

Inspired by the algebraic methods that Gracyzk and Sawyer used to study
related problems in the symmetric space setting (see \cite{GS1}-\cite{GS3}),
the author with Gupta in \cite{CJM} almost completely characterized the $L$
tuples $(X_{1},...,X_{L})$ such that $\nu _{X_{1}}\ast \cdot \cdot \cdot
\ast \nu _{X_{L}}$ is absolutely continuous for each of the compact
classical Lie algebras. The characterization can again be expressed in terms
of the dimensions of the largest eigenspaces. As a corollary of this work,
we were also able to similarly characterize the absolute continuity of $\mu
_{x_{1}}\ast \cdot \cdot \cdot \ast \mu _{x_{L}}$ in the special case that
all the $x_{j}$ had the property that $\dim C_{x_{j}}=\dim O_{X_{j}}$ where $%
\exp (X_{j})=x_{j}$. However, in all the compact Lie groups, except those of
type $A_{n},$ there are many elements $x$ which do not have this special
property. For instance, any element $x\in SO(2n+1)$ with $-1$ an eigenvalue
of multiplicity at least four fails to have this special property.

In this note, we adapt the strategy of \cite{CJM} to complete the
characterization of the absolutely continuous convolution products of
orbital measures on the compact Lie groups of type $B_{n}$, the classical
model being the group $SO(2n+1)$. The characterization is more complicated
than the Lie algebra case, but again can be expressed in terms of the
dimensions of the largest eigenspace of each $x_{j}$. We refer the reader to
Theorem \ref{main} and Definition \ref{eligible} for the precise statement.
Our proof depends heavily upon the Lie theory of roots and root vectors, and
particularly the structure of subsystems of annihilating roots of the
elements $x_{j}$. Type $B_{n}$ is unusual in this regard as its root system
has irreducible subsystems of type $D_{j},$ in addition to $B_{j}$ and $%
A_{j} $, in contrast to the situation in the Lie groups of types $C_{n}$, $%
D_{n}$ or even the Lie algebra of type $B_{n}$. It is this feature which
seems to make the characterization so complex.

We conclude with a conjecture for the characterization of absolute
continuity for the classical Lie algebras of type $C_{n}$ and $D_{n}$ and
briefly discuss an approach to its proof.

\section{Notation and Terminology}

\subsection{Basic notation}

For the remainder of this paper, $G_{n}$ will be a compact, connected simple
Lie group of Lie type $B_{n},$ with Lie algebra $\mathfrak{g}_{n}$ and
centre $Z(G_{n})$. At times, we may surpress the subscript $n,$ which is its
rank. We denote by $T_{n}$ a maximal torus of $G_{n}$ and let $\mathfrak{t}%
_{n}$ denote the Lie algebra of $T_{n}$, a maximal torus of $\mathfrak{g}%
_{n} $. The Haar measure on $G$ will be denoted by $m_{G}$. Our model of
these groups will be the matrix group $SO(2n+1),$ the group of $(2n+1)\times
(2n+1) $ real, unitary matrices of determinant $1$.

We take as its torus the block diagonal matrices, with $n$ $2\times 2$ block
matrices of the form 
\begin{equation*}
\left[ 
\begin{array}{cc}
\cos \theta _{j} & -\sin \theta _{j} \\ 
\sin \theta _{j} & \cos \theta _{j}%
\end{array}%
\right] ,
\end{equation*}%
and a $1$ in the final $1\times 1$ block, and identify such a torus element
with the $n$-vector $(e^{i\theta _{1}},...,e^{i\theta _{n}}).$ The Lie
algebra of this Lie group is $so(2n+1),$ the real $(2n+1)\times (2n+1)$
skew-Hermitian matrices and its torus can similarly be identified with
vectors in $\mathbb{R}^{n}$.

We write $[\cdot ,\cdot ]$ for the Lie bracket action and $ad$ for the map: $%
\mathfrak{g}_{n}\rightarrow \mathfrak{g}_{n}$ given by $ad(X)(Y)=[X,Y]$. The
group acts on its Lie algebra by the adjoint action, denoted $Ad(\cdot ),$
and the exponential map, $\exp ,$ is a surjection from $\mathfrak{g}_{n}$
onto $G_{n}$. We recall that 
\begin{equation*}
Ad(\exp M)=\exp (ad(M))=Id+\sum_{k=1}^{\infty }\frac{ad^{k}(M)}{k!},
\end{equation*}%
where $ad^{k}(M)$ is the $k$-fold composition of $ad(M)$.

\subsection{Orbital measures}

Every element in $G_{n}$ is conjugate to some element $x\in T_{n}$ that has
the vector form

\begin{equation}
x=(\underbrace{1,...,1}_{u},\underbrace{-1,...,-1}_{v},\underbrace{%
e^{ia_{1}},...,e^{ia_{1}}}_{s_{1}},...,\underbrace{e^{ia_{m}},...,e^{ia_{m}}}%
_{s_{m}})  \label{x}
\end{equation}%
where the $0<a_{j}<\pi $ are distinct and $u+v+s_{1}+\cdot \cdot \cdot
+s_{m}=n$. Thus $x$ has $1$ as an eigenvalue with multiplicity $2u+1,$ $-1$
as an eigenvalue with multiplicity $2v$ and each $e^{\pm ia_{j}}$ as an
eigenvalue with multiplicity $s_{j}$. We denote the conjugacy class
containing $x$ by 
\begin{equation*}
C_{x}=\{gxg^{-1}:g\in G_{n}\}.
\end{equation*}%
Similarly, every element of $\mathfrak{g}_{n}$ is in the adjoint orbit $%
O_{X}=\{Ad(g)X:g\in G_{n}\}$ of some $X$ $\in \mathfrak{t}_{n}$. Note that
the element $x$ of (\ref{x}) is equal to $\exp X_{x}$ where 
\begin{equation*}
X_{x}=(0,...,0,\pi ,...,\pi ,a_{1},...,a_{1},...,a_{m},...,a_{m})\in 
\mathfrak{t}_{n}
\end{equation*}%
and that $C_{x}\subseteq \exp (O_{X_{x}})$. This inclusion can be proper.

\begin{definition}
(i) For any $x\in T,$ the \textbf{orbital measure} $\mu _{x}$ is the $G$%
-invariant probability measure supported on $C_{x},$ defined by 
\begin{equation*}
\int_{G}fd\mu _{x}=\int_{G}f(gxg^{-1})dm_{G}(g)\text{ for }f\text{
continuous on }G.
\end{equation*}

(ii) For any $X\in \mathfrak{t}$, the \textbf{orbital measure} $\nu _{X}$ is
the $Ad(G)$-invariant probability measure supported on $O_{X},$ defined by%
\begin{equation*}
\int_{\mathfrak{g}}fd\nu _{X}=\int_{G}f(Ad(g)X)dm_{G}(g)\text{ for }f\text{
bounded and continuous on }\mathfrak{g}\text{.}
\end{equation*}
\end{definition}

\begin{definition}
We will say that the tuple $(x_{1},...,x_{L})\in T^{L}$ (or $%
(X_{1},...,X_{L})$ $\in \mathfrak{t}^{L}$) is \textbf{absolutely continuous }%
if the orbital measure $\mu _{x_{1}}\ast \cdot \cdot \cdot \ast \mu _{x_{L}}$
on $G$ (resp., $\nu _{X_{1}}\ast \cdot \cdot \cdot \ast \nu _{X_{L}}$ on $%
\mathfrak{g}$) is absolutely continuous with respect to Haar measure on $G$
(resp., Lebesgue measure on $\mathfrak{g}$). This is the same as saying the
orbital measure has an $L^{1}$ density function.
\end{definition}

\subsection{Roots, Root vectors and Type}

We denote by $\Phi _{n}$ the set of roots of $\mathfrak{g}_{n}\mathfrak{:}$%
\begin{equation*}
\Phi _{n}=\{\pm e_{k},\pm e_{i}\pm e_{j}:1\leq i<j\leq n,1\leq k\leq n\}
\end{equation*}%
and let $\Phi _{n}^{+}$ denote the positive roots. Again, we may suppress
the subscript. For each root $\alpha \in \Phi _{n}$, we let $E_{\alpha }$
denote a root vector\textbf{\ }corresponding to $\alpha ,$ meaning that if $%
H\in \mathfrak{t}_{n}$, then $[H,E_{\alpha }]=i\alpha (H)E_{\alpha }$. (We
make the convention that roots are real valued.) We define a function $%
\alpha _{G}$ acting on $x\in T$ by the rule $\alpha _{G}(x):=\exp i\alpha
(X_{x})$.

We will take a collection of root vectors $\{E_{\alpha }:\alpha \in \Phi
^{+}\}$ that form a Weyl basis for $\mathfrak{g}$ (see \cite[p. 421]{He} or 
\cite[p. 290]{Va}) and set 
\begin{equation*}
RE_{\alpha }=\frac{E_{\alpha }+E_{-\alpha }}{2},IE_{\alpha }=\frac{E_{\alpha
}-E_{-\alpha }}{2i}.
\end{equation*}%
We also call these root vectors. These are vectors in $\mathfrak{g}$ such
that $E_{\alpha }=RE_{\alpha }+iIE_{\alpha }$. Being a Weyl basis, we have%
\begin{eqnarray}
\lbrack RE_{\alpha },RE_{\beta }] &=&cRE_{\alpha +\beta }+dRE_{\beta -a}
\label{Weyl} \\
-[IE_{\beta },RE_{\alpha }] &=&[RE_{\alpha },IE_{\beta }]=cIE_{\alpha +\beta
}+dIE_{\beta -a}  \notag \\
\lbrack IE_{\alpha },IE_{\beta }] &=&-cRE_{\alpha +\beta }+dRE_{\beta -a} 
\notag
\end{eqnarray}%
where $c,d\neq 0.$

By the annihhilating roots of $x\in T$ or $X\in \mathfrak{t}$ we mean the
sets 
\begin{eqnarray*}
\Phi _{x} &=&\{\alpha \in \Phi :\alpha _{G}(x)=1\}\text{ or} \\
\Phi _{X}^{\mathfrak{g}} &=&\{\alpha \in \Phi :\alpha (X)=0\}.
\end{eqnarray*}%
Equivalently,%
\begin{equation*}
\Phi _{x}=\{\alpha \in \Phi :\alpha (X_{x})=0\func{mod}2\pi \}.
\end{equation*}%
These are root subsystems.

We note that $\Phi _{x}=\Phi $ if and only if $x\in Z(G)$. If $\Phi _{x}$ is
empty, $x$ is called regular.

We point out that $\Phi _{X_{x}}^{\mathfrak{g}}\subseteq \Phi _{x}$ and this
inclusion can be proper. For example, if $x=(-1,...,-1)\in SO(2n+1)$, then
we may take $X_{x}=(\pi ,...,\pi ),$ so $\Phi _{x}=\{\pm e_{i}\pm
e_{j}:i\neq j\},$ while $\Phi _{X_{x}}^{\mathfrak{g}}=\{\pm
(e_{i}-e_{j}):i\neq j\}$. The centre of $SO(2n+1)$ is trivial.

We also let%
\begin{eqnarray*}
\mathcal{N}_{x} &=&\{RE_{\alpha },IE_{\alpha }:\alpha \in \Phi ^{+},\alpha
\notin \Phi _{x}\}\text{ and} \\
\mathcal{N}_{X}^{\mathfrak{g}} &=&\{RE_{\alpha },IE_{\alpha }:\alpha \in
\Phi ^{+},\alpha \notin \Phi _{X}^{\mathfrak{g}}\}\text{ ,}
\end{eqnarray*}%
the sets of non-annihilating root vectors. Of course, $\mathcal{N}%
_{x}\subseteq \mathcal{N}_{X_{x}}^{\mathfrak{g}}$. It is known that $\dim
C_{x}=\left\vert \mathcal{N}_{x}\right\vert $ and $\dim O_{X}=\left\vert 
\mathcal{N}_{X}^{\mathfrak{g}}\right\vert ,$ \cite[VI.4]{MT}.

\begin{example}
For the $x$ of (\ref{x}), $\Phi _{x}=$ $\Psi _{0}\tbigcup \Psi _{\pi
}\tbigcup \tbigcup_{\ell =1}^{m}\Psi _{\ell }$ where%
\begin{eqnarray*}
\Psi _{0} &=&\{\pm e_{k},e_{i}\pm e_{j}:1\leq i,j,k\leq u,i\neq j\} \\
\Psi _{\pi } &=&\{e_{i}\pm e_{j}:u<i\neq j\leq u+v\}\text{ and} \\
\Psi _{\ell } &=&\{e_{i}-e_{j}:u+v+s_{1}+\cdot \cdot \cdot +s_{\ell
-1}<i\neq j\leq u+v+s_{1}+\cdot \cdot \cdot +s_{\ell }\}
\end{eqnarray*}%
for $\ell =1,...,m$. The root vectors $RE_{e_{i}\pm e_{j}}$ and $RE_{e_{j}}$
with $i\leq u$ and $j>u$ are some examples of non-annihilating root vectors
of $x$.
\end{example}

There is an important geometric relationship between the absolute continuity
of a convolution product of orbital measures and the product of the
associated conjugacy classes or sum of orbits, as outlined in the next
result. See \cite[Lemma 7.6]{CJM}, \cite{Ra} and \cite[Prop. 2.2]{Wr} for
proofs.

\begin{proposition}
\label{charabscont}The orbital measure $\mu _{x_{1}}\ast \cdots \ast \mu
_{x_{L}}$ on $G$ (or $\nu _{X_{1}}\ast \cdot \cdot \cdot \ast \nu _{X_{L}}$
on $\mathfrak{g}$) is absolutely continuous with respect to Haar measure on $%
G$ (or Lebesgue measure on $\mathfrak{g}$) if and only if any of the
following hold:

\textrm{(i)} The set $\tprod_{i=1}^{L}C_{x_{i}}\subseteq G$ has non-empty
interior (resp., the set $\sum_{i=1}^{L}O_{X_{i}}\subseteq G$ has non-empty
interior);

\textrm{(ii)} The set $\tprod_{i=1}^{L}C_{x_{i}}\subseteq G$ has positive
measure (resp., the set $\sum_{i=1}^{L}O_{X_{i}}\subseteq G$ has positive
measure);

\textrm{(iii)} There exist $g_{i}\in G$ with $g_{1}=Id$, such that 
\begin{eqnarray*}
sp(Ad(g_{i})\mathcal{N}_{x_{i}} &:&i=1,\dots ,L)=\mathfrak{g}\text{ } \\
(\text{resp., }sp(Ad(g_{i})\mathcal{N}_{X_{i}}^{\mathfrak{g}} &:&i=1,\dots
,L)=\mathfrak{g}).
\end{eqnarray*}
\end{proposition}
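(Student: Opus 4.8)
The plan is to realize each convolution as the pushforward of Haar measure under a single real-analytic map and to read off all four conditions from the rank of that map; I treat the group case and note that the Lie algebra case is parallel. Define $F:G^{L}\to G$ by $F(g_{1},\dots ,g_{L})=g_{1}x_{1}g_{1}^{-1}\cdots g_{L}x_{L}g_{L}^{-1}$. From the definitions of the orbital measures and of convolution, $\mu _{x_{1}}\ast \cdots \ast \mu _{x_{L}}=F_{\ast }(m_{G}\times \cdots \times m_{G})$, and its closed support is $\overline{\prod_{i}C_{x_{i}}}=\overline{F(G^{L})}$. Thus (i) and (ii) are assertions about the image set $F(G^{L})$, while (iii) will be recast as the statement that $dF$ attains full rank $\dim G$ at some point of $G^{L}$.

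The central computation is the differential of $F$. For a single class, right-translating the tangent space of $C_{x}$ at $gxg^{-1}$ back to $\mathfrak{g}$ yields the image of $I-Ad(gxg^{-1})=Ad(g)(I-Ad(x))Ad(g^{-1})$. Since $Ad(x)$ fixes $\mathfrak{t}$ and acts on each plane $sp(RE_{\alpha },IE_{\alpha })$ as rotation by $\alpha (X_{x})$, the operator $I-Ad(x)$ annihilates exactly the root planes with $\alpha \in \Phi _{x}$ and is invertible on the rest, so its image is $sp(\mathcal{N}_{x})$ and the transported tangent space is $Ad(g)sp(\mathcal{N}_{x})$. Applying the product rule to $F$ and right-translating by $F^{-1}$, I expect the range of $dF$ at $(g_{1},\dots ,g_{L})$ to be $\sum_{i=1}^{L}Ad(h_{i})sp(\mathcal{N}_{x_{i}})$, where $h_{i}=g_{1}x_{1}g_{1}^{-1}\cdots g_{i-1}x_{i-1}g_{i-1}^{-1}g_{i}$. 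The assignment $(g_{1},\dots ,g_{L})\mapsto (h_{1},\dots ,h_{L})$ is a bijection of $G^{L}$, and applying the invertible map $Ad(h_{1}^{-1})$ replaces each $h_{i}$ by $h_{1}^{-1}h_{i}$ without affecting surjectivity, which is precisely the normalization $g_{1}=Id$ in (iii). Hence $dF$ has full rank somewhere if and only if (iii) holds.

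It remains to tie "full rank somewhere" to the analytic and measure-theoretic conditions. Because $F$ is real-analytic and $G^{L}$ is connected, the critical set (where $dF$ fails to be surjective) is a closed real-analytic subset; if $dF$ has full rank at even one point it is a proper subset, hence has measure zero, so full rank at one point already forces full rank almost everywhere. On the open full-rank locus $F$ is a submersion, so $F(G^{L})$ contains an open set, giving (i), and the pushforward of Haar restricted there is absolutely continuous. On the complementary critical set, Sard's theorem guarantees $F$ of that set has Lebesgue measure zero, so its contribution to the pushforward is singular; by uniqueness of the Lebesgue decomposition, $\mu _{x_{1}}\ast \cdots \ast \mu _{x_{L}}$ is absolutely continuous exactly when this critical set is null, i.e. exactly when (iii) holds. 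The same dichotomy gives (ii): a set of positive measure cannot be swallowed by the null critical-value set, so it must meet the submersion image and therefore have non-empty interior.

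The step I expect to be most delicate is the bookkeeping in the second paragraph: writing $dF$ as a sum of transported single-class tangent spaces, identifying the transporters $h_{i}$ correctly, and verifying that the change of variables $(g_{i})\leftrightarrow (h_{i})$ together with $Ad(h_{1}^{-1})$ really reduces to the normalization $g_{1}=Id$ without altering the span. The remainder is a standard Sard-plus-real-analyticity argument. For the Lie algebra statement one runs the identical scheme with $F(g_{1},\dots ,g_{L})=\sum_{i}Ad(g_{i})X_{i}$ into $\mathfrak{g}$; here the tangent space of $O_{X}$ at $X$ is $ad(X)\mathfrak{g}=sp(\mathcal{N}_{X}^{\mathfrak{g}})$, since $ad(X)$ scales the $\alpha $-root plane by $\alpha (X)$, which vanishes precisely on $\Phi _{X}^{\mathfrak{g}}$.
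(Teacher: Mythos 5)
Your proposal is correct, and it reconstructs essentially the argument the paper relies on: the paper gives no proof of this proposition, deferring to \cite[Lemma 7.6]{CJM}, \cite{Ra} and \cite[Prop. 2.2]{Wr}, and those proofs run exactly along your lines. Namely, one realizes the convolution as the pushforward of Haar measure under the product map $F$, identifies the translated tangent space of each conjugacy class with $Ad(g_{i})\,sp(\mathcal{N}_{x_{i}})$ via the image of $I-Ad(x_{i})$ (rotation by $\alpha (X_{x_{i}})$ on each root plane), and closes the equivalences by combining real-analyticity of $F$ (so the critical set is either all of $G^{L}$ or a null analytic subset) with Sard's theorem, just as you do.
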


We observe that this result implies that if $\mu _{x_{1}}\ast \cdots \ast
\mu _{x_{L}}$ (or $\mu _{X_{1}}\ast \cdot \cdot \cdot \ast \mu _{X_{L}})$ is
not absolutely continuous, then it is purely singular with respect to Haar
measure (or Lebesgue measure).

\section{Dominant Type and Eligibility}

\begin{definition}
Given $x\in T$ as in (\ref{x}), we will say that $x$ is of\textbf{\ type} 
\begin{equation*}
B_{u}\times D_{v}\times SU(s_{1})\times \cdot \cdot \cdot \times SU(s_{m})
\end{equation*}
as this is the Lie type of $\Phi _{x}$. Here by $B_{1}$ we mean the root
system $\{\pm e_{1}\},$ by $D_{2}$ we mean $\left\{ \pm e_{1}\pm
e_{2}\right\} ,$ and $\ B_{0},$ $D_{0},$ $D_{1},$ $SU(0)$ and $SU(1)$ are
empty (and may be omitted if this does not cause confusion).
\end{definition}

\begin{remark}
It seems to be more helpful to say Lie type $SU(s_{j})$, rather than type $%
A_{s_{j}-1},$ as the values $s_{j}$ arise naturally.
\end{remark}

There is a similar definition of type for elements of $\mathfrak{t}$, (see 
\cite{CJM}), but there is no $D_{v}$ component in that case.

\begin{example}
If $x$ is type $D_{1}$ or $SU(1),$ then $x$ is regular. The only element in $%
T_{n}$ of type $B_{n}$ is the Identity.
\end{example}

\begin{example}
When $x$ is type $B_{u}\times SU(s_{1})\times \cdot \cdot \cdot \times
SU(s_{m}),$ then $x$ and $X_{x}$ have the same type, but not otherwise. When 
$x$ and $X_{x}$ do not have the same type, we have $\mathcal{N}%
_{x}\subsetneq \mathcal{N}_{X_{x}}^{\mathfrak{g}},$ so $\dim C_{x}<\dim
O_{X_{x}}$. For example, when $x=(-1,...,-1)\in T_{n},$ then $x$ is type $%
D_{n},$ whereas $X_{x}$ is of type $SU(n)$. Thus $\dim C_{x}=2n,$ while $%
\dim O_{x}=n(n+1)$.
\end{example}

In \cite{CJM} the notion of dominant type for $X\in \mathfrak{t}$ was
defined: $X$ is of type $B_{u}\times SU(s_{1})\times \cdot \cdot \cdot
\times SU(s_{m})$ is of dominant type $B$ if $2u>\max s_{j}$ and otherwise $%
X $ is of dominant type $S$. Set $S_{X}^{\mathfrak{g}}=2u$ if $X$ is of
dominant type $B$ and $S_{X}^{\mathfrak{g}}=\max s_{j}$ otherwise.

The analogous notions for elements of $T$ are more complicated, as outlined
below.

\begin{definition}
\label{dom}Suppose $x\in T$ is of type $B_{u}\times D_{v}\times
SU(s_{1})\times \cdot \cdot \cdot \times SU(s_{m})$. Let $s=\max s_{j}$. We
will say 
\begin{equation*}
x\text{ is of \textbf{dominant type} }\left\{ 
\begin{array}{cc}
B & \text{if }u>v,2u+1>s \\ 
D & \text{if }v>u,2v>s \\ 
BD & \text{if }v=u,2v\geq s \\ 
S & \text{if }s\geq 2u+1,2v%
\end{array}%
\right. .
\end{equation*}%
Put 
\begin{equation*}
S_{x}=\text{ }\left\{ 
\begin{array}{cc}
2u+1 & \text{if }x\text{ is dominant type }B \\ 
2v & \text{if }x\text{ is dominant type }D \\ 
s & \text{if }x\text{ is dominant type }S \\ 
(2u+1,2v) & \text{if }x\text{ is dominant type }BD%
\end{array}%
\right. .
\end{equation*}%
In the case that $x$ is of dominant type $BD,$ we will let $S_{x}^{(1)}=2u+1$
and $S_{x}^{(2)}=2v$.
\end{definition}

It can be verified that each $x$ satisfies precisely one dominant type
criterion.

Note that if $x$ is of dominant type $B,D$ or $S,$ then $S_{x}$ is the
dimension of its largest eigenspace. When $x$ is type $BD,$ $S_{x}^{(1)}$ is
the dimension of the eigenspace corresponding to the eigenvalue $1$ and $%
S_{x}^{(2)}$ is the dimension of the eigenspace of the eigenvalue $-1,$ the
two largest eigenspaces. We will see that essentially we can treat $x$ as
either of dominant type $B$ or $D,$ depending on which is more helpful in
the circumstance.

In \cite{CJM}, the tuple $(X_{1},...,X_{L})\in \mathfrak{t}_{n}^{L},$ was
said to be eligible if 
\begin{equation}
\sum_{i=1}^{L}S_{X_{i}}^{\mathfrak{g}_{n}}\leq 2n(L-1).  \label{LieElig}
\end{equation}%
In defining eligibility in the group setting, it is helpful to first say
that a tuple $(x_{1},..,x_{L})$ has \textbf{parity }$p=1$ if there are an
odd number of $x_{j}$ that are of dominant type $D$ and \textbf{parity }$p=2$
if there are an even number.

\begin{definition}
\label{eligible}We will say that the tuple $(x_{1},...,x_{L})\in T_{n}^{L}$
of parity $p$ is \textbf{eligible} if one of the following (exclusive)
conditions hold.

(i) Any two or more of $x_{1},...,x_{L}$ are of dominant type $BD$ or $S.$

(ii) Only one $x_{j}$ is of dominant type $S$, none are of dominant type $BD 
$ and%
\begin{equation*}
\sum_{i=1}^{L}S_{x_{i}}\leq (2n+1)(L-1).
\end{equation*}

(iii) Only one $x_{j},$ say $x_{1},$ is of dominant type $BD$, none are of
dominant type $S$ and%
\begin{equation*}
S_{x_{1}}^{(p)}+\sum_{i=2}^{L}S_{x_{i}}\leq (2n+1)(L-1).
\end{equation*}

(iv) No $x_{j}$ is of dominant type $BD$ or $S$ and 
\begin{equation*}
\sum_{i=1}^{L}S_{x_{i}}\leq (2n+1)(L-1)+p-1.
\end{equation*}
\end{definition}

\begin{example}
A pair $(x,y)$ of dominant type $(B,B)$ (meaning both $x$ and $y$ are of
dominant type $B)$ is eligible if $2u_{1}+2u_{2}\leq 2n$, a pair of dominant
type $(D,D)$ is eligible if $2v_{1}+2v_{2}\leq 2n+2,$ and pairs of dominant
type $(B,D),(B,BD)$ or $(BD,D)$ are eligible if $2u_{1}+2v_{2}\leq 2n$. For
instance, if $x$ is type $B_{1}$ and $y$ is type $D_{2}$ in the Lie group $%
B_{2},$ then $(x,y)$ is not eligible. One can similarly determine the
requirements for types $(S,B)$ and $(S,D).$ All pair types $(S,S),$ $(S,BD)$
or $(BD,BD)$ are eligible.
\end{example}

\begin{example}
A central element in $G_{n}$ is type $B_{n}$. Thus if $x_{1}$ is central,
then $(x_{1},...,x_{L})$ is eligible if and only if $(x_{2},...,x_{L})$ is
eligible.
\end{example}

Our goal is to prove that eligibility characterizes absolute continuity. The
necessity of eligibility is easy to prove:

\begin{proposition}
\label{nec}Any absolutely continuous tuple in\ $T_{n}^{L}$ is eligible.
\end{proposition}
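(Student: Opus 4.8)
The plan is to prove the contrapositive: if a tuple is not eligible, then it is not absolutely continuous, which by Proposition \ref{charabscont}(iii) means that for every choice of $g_i \in G$ with $g_1 = Id$, the span of $\bigcup_i Ad(g_i)\mathcal{N}_{x_i}$ is a proper subspace of $\mathfrak{g}$. The natural way to exhibit such a proper subspace is a dimension count: since $\dim C_{x_i} = |\mathcal{N}_{x_i}|$, the span in question always has dimension at most $\sum_i |\mathcal{N}_{x_i}| - (L-1)\dim T$, because all $L$ copies share the torus directions (more precisely, each $\mathcal{N}_{x_i}$ omits the Cartan directions, and the first tuple is untranslated, so the spans overlap in a fixed subspace of dimension at least $\dim T \cdot (L-1)$ coming from repeated inclusion of $\mathfrak{t}$). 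First I would compute $|\mathcal{N}_{x_i}| = \dim C_{x_i}$ explicitly in terms of $u_i, v_i, s_{j}^{(i)}$ using the description of $\Phi_{x_i}$ in the Example following Proposition \ref{charabscont}, and relate it to $S_{x_i}$.

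The key arithmetic step is to show that $\dim G - \sum_i \dim C_{x_i} + (\text{overlap})$ is positive precisely when the eligibility inequalities fail. Concretely, $\dim G_n = 2n^2 + n$ for type $B_n$, and the complement of $\Phi_{x_i}^+$ in $\Phi^+$ contributes $2|\Phi^+ \setminus \Phi_{x_i}^+|$ to $\dim C_{x_i}$; I would rewrite each $\dim C_{x_i}$ as $\dim G - \dim Z_{x_i}$, where $Z_{x_i}$ is the centralizer, and observe that $\dim Z_{x_i}$ is governed by the dimension of the largest eigenspace, i.e. by $S_{x_i}$. The inequality $\sum_i S_{x_i} \le (2n+1)(L-1) + (\text{correction})$ should be exactly the condition under which $\sum_i \dim C_{x_i}$ stays below the threshold forcing the spans to fill $\mathfrak{g}$. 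The role of the parity $p$ and the $(2n+1)$ factor (as opposed to the $2n$ in the Lie algebra case \eqref{LieElig}) comes from the extra $+1$ in the multiplicity $2u+1$ of the eigenvalue $1$ and from the fact that the $-1$ eigenspace carries a $D_v$ rather than an $SU$ structure; I would track this carefully, since the asymmetry between the $1$- and $(-1)$-eigenspaces is what splits the analysis into the four cases of Definition \ref{eligible}.

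The cases with a dominant-type-$S$ or dominant-type-$BD$ element need separate handling. For cases (i) and the single-exception cases (ii), (iii), the point is that a dominant-$S$ element has its largest eigenspace among the $e^{\pm i a_j}$ eigenvalues, so its centralizer is larger and the crude bound loosens; having two such elements (case (i)) always supplies enough non-annihilating directions, so eligibility is automatic and there is nothing to prove. For a single $BD$ element I would use the freedom noted after Definition \ref{dom} to treat it as either dominant type $B$ or $D$, choosing the relevant $S_{x_1}^{(p)}$ to match the parity; the parity $p$ records whether an odd or even number of $D$-type factors appear, which determines whether the $-1$ eigenvalues can be "absorbed" when forming the product of conjugacy classes. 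I expect the main obstacle to be making the overlap/dimension bound sharp enough to match the exact constant in each inequality rather than an off-by-a-bounded-amount estimate: the necessity direction requires showing the span is genuinely proper, so I must produce a concrete linear functional (or invariant codimension) vanishing on every $Ad(g_i)\mathcal{N}_{x_i}$, and verifying this uniformly over all $g_i$ is where the structure of type-$B_n$ root subsystems — particularly the embedded $D_j$ subsystems — enters and must be controlled.
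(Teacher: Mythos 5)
There is a genuine gap here: the quantitative engine you propose cannot produce the eligibility inequalities, even in principle. You work in the adjoint representation and try to bound $\dim sp(Ad(g_{i})\mathcal{N}_{x_{i}}:i=1,\dots ,L)$ by $\sum_{i}|\mathcal{N}_{x_{i}}|$ minus a claimed fixed overlap of $(L-1)\dim T$. That overlap claim is false: $Ad(g_{i})\mathcal{N}_{x_{i}}$ spans a subspace avoiding $Ad(g_{i})\mathfrak{t}$, not $\mathfrak{t}$ (only $g_{1}=Id$ is pinned), so the avoided subspaces move with the $g_{i}$ and there is no fixed common complement; two generic translates of a codimension-$n$ subspace already span $\mathfrak{g}$. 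More fundamentally, no estimate whose inputs are the numbers $|\mathcal{N}_{x_{i}}|=\dim C_{x_{i}}$ (together with $\dim T$ and $\dim G$) can characterize eligibility, because eligibility is not a function of those numbers. Concretely, in $B_{4}$ take $x_{1}=x_{2}$ of type $D_{3}$ (so $u=0$, $v=3$, dominant type $D$, $S_{x_{j}}=6$) and $y_{1}=y_{2}$ of type $B_{2}\times D_{2}$ (dominant type $BD$). In both cases the annihilating root system has exactly $12$ roots, so all four elements satisfy $\dim C=32-12=20$; yet $(y_{1},y_{2})$ is eligible by Definition \ref{eligible}(i), while $(x_{1},x_{2})$ is not, since $S_{x_{1}}+S_{x_{2}}=12>(2n+1)(L-1)+p-1=10$. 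By Theorem \ref{main} one pair is absolutely continuous and the other is purely singular, although they have identical conjugacy-class dimension data, so any threshold on $\sum_{i}\dim C_{x_{i}}$ would have to both pass and fail at the value $40$. For the same reason your auxiliary claim that $\dim Z_{x_{i}}$ is ``governed by'' $S_{x_{i}}$ is false (the example after the first Corollary in Section 5.2 of the paper makes exactly this point).

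What the paper actually does is abandon the adjoint representation entirely and argue in the defining representation $\mathbb{R}^{2n+1}$, which is where the constant $2n+1$ really comes from. Given arbitrary $y_{j}\in C_{x_{j}}$, let $V_{j}\subseteq \mathbb{R}^{2n+1}$ be the largest eigenspace of the matrix $y_{j}$ (eigenvalue $+1$, $-1$, or $\alpha _{1}$ according to dominant type, the parity $p$ dictating which eigenspace to take for a $BD$ element). Then $\dim \tbigcap_{j}V_{j}\geq \sum_{j}S_{x_{j}}-(L-1)(2n+1)$, and failure of eligibility makes this at least $1$ when the number of dominant-$D$ factors is odd, and at least $2$ when it is even. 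Hence every element of $\tprod_{j}C_{x_{j}}$ has an eigenvector with eigenvalue $-1$ (resp.\ a two-dimensional eigenspace for the eigenvalue $+1$, resp.\ an eigenvector with eigenvalue $\pm \alpha _{1}$ in the type-$S$ case); each of these properties is impossible for every element of a set with non-empty interior in $SO(2n+1)$, contradicting Proposition \ref{charabscont}. This is also exactly where the parity enters, as the sign $\prod_{j}\alpha _{j}=(-1)^{\#\{j:\,x_{j}\text{ dominant }D\}}$, something invisible to span-dimension counting in $\mathfrak{g}$. If you wish to keep your formulation via Proposition \ref{charabscont}(iii), note that a two-dimensional common $+1$-eigenspace spanned by $v,w$ does yield the nonzero element $vw^{T}-wv^{T}$ killed by $ad$ of every $Ad(g_{j})$-translate of the centralizer data, but to produce $v$ and $w$ you still need the defining-representation eigenspace count; it cannot be bypassed.
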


\begin{proof}
Assume $(x_{1},...,x_{L})$ is absolutely continuous, but not eligible. We
will use the fact that the absolute continuity of $\mu _{x_{1}}\ast \cdot
\cdot \cdot \ast \mu _{x_{L}}$ implies $\prod_{i=1}^{L}C_{x_{i}}$ has
non-empty interior, Proposition \ref{charabscont}.

First, consider the case that no $x_{j}$ is of dominant type $BD$ or $S$.
Let $y_{j}=g_{j}^{-1}x_{j}g_{j}$ be an arbitrary element of $C_{x_{j}}$ and
assume $V_{j}$ is the eigenspace of $y_{j}$ corresponding to the eigenvalue $%
\alpha _{j}=$ $1$ if $x_{j}$ is of dominant type $B$ and eigenvalue $-1$ if $%
x_{j}$ is of dominant type $D$. In either case, $V_{j}$ is the eigenspace of 
$y_{j}$ of dimension $S_{x_{j}}$.

Let $y=y_{1}\cdot \cdot \cdot y_{L}$ and note that if $v\in
\tbigcap_{j=1}^{L}V_{j}$, $v\neq 0,$ then $y(v)=\left( \prod_{i=1}^{L}\alpha
_{j}\right) v,$ so $v$ is an eigenvector of matrix $y$ with eigenvalue $%
\alpha =\prod_{i=1}^{L}\alpha _{j}$. Of course, $\alpha =1$ if there are an
even number of $x_{j}$ of dominant type $D$ and $\alpha =-1$ otherwise.

Now%
\begin{eqnarray*}
&&\dim \tbigcap\limits_{j=1}^{L}V_{j} \\
&=&\sum_{j=1}^{L}\dim V_{j}-\left( \dim (V_{1}+V_{2})+\dim ((V_{1}\tbigcap
V2\right) +V_{3})+\cdot \cdot \cdot +\dim \left(
\tbigcap\limits_{j=1}^{L-1}V_{j}+V_{L}\right) \\
&\geq &\sum_{j=1}^{L}S_{x_{j}}-(L-1)(2n+1).
\end{eqnarray*}

If there are an odd number of $x_{j}$ of dominant type $D,$ then the failure
of eligibility implies $\dim \tbigcap_{j=1}^{L}V_{j}\geq 1$. This shows that
any $y\in \prod_{i=1}^{L}C_{x_{i}}$ has a non-zero eigenvector with
eigenvalue $\alpha =-1$ and that is impossible for a set with non-empty
interior.

If there are an even number of $x_{j}$ of dominant type $D,$ then the
failure of eligibility implies $\dim \tbigcap_{j=1}^{L}V_{j}\geq 2$, so any
such $y$ has two linearly independent eigenvectors with eigenvalue $1$.
Every element of $SO(2n+1)$ has one eigenvector with eigenvalue $1$, but a
set with non-empty interior cannot have a second linearly independent
eigevector with eigenvalue $1,$ so again we obtain a contradiction.

Next, suppose there is one $x_{j},$ say $x_{1},$ of dominant type $BD$ and
none of dominant type $S$. Assume there are an odd number of $x_{j}$ of
dominant type $D$. As before, let $y_{j}$ be an arbitrary element of $%
C_{x_{j}}$ and $y=y_{1}\cdot \cdot \cdot y_{L}$. Let $V_{1}$ be the
eigenspace of $y_{1}$ corresponding to the eigenvalue $1$ having dimension $%
S_{x_{1}}^{(1)},$ and let $V_{j}$ be eigenspace of $y_{j}$ of dimension $%
S_{x_{j}}$ for $j\neq 1$. The calculations as above and failure of
eligibility implies 
\begin{equation*}
\dim \tbigcap\limits_{j=1}^{L}V_{j}\geq
S_{x_{1}}^{(1)}+\sum_{j=2}^{L}S_{x_{j}}-(L-1)(2n+1)\geq 1\text{.}
\end{equation*}%
Moreover, any non-zero $v\in \tbigcap_{j=1}^{L}V_{j}$ is an eigenvector of $%
y $ corresponding to the eigenvalue $(-1)^{\#x_{j}\text{ dominant }D}=-1$ as
we have assumed there are an odd number of $x_{j}$ of dominant type $D$.
Thus every element of $\prod_{i=1}^{L}C_{x_{i}}$ has an eigenvector with
eigenvalue $\alpha =-1$ and that is impossible for a set with non-empty
interior.

If there are an even number of $x_{j}$ of dominant type $D$ the arguments
are similar, but we begin with $V_{1}$ the eigenspace of $Y_{1}$ with
eigenvalue $-1$. This space has dimension $S_{x_{1}}^{(2)}$. In this case,
any non-zero $v\in \tbigcap_{j=1}^{L}V_{j}$ is an eigenvector of $Y$
corresponding to the eigenvalue $(-1)(-1)^{\#x_{j}\text{ dominant }D}=-1$
and again the failure of eligibility implies that $\tbigcap_{j=1}^{L}V_{j}$
has dimension at least one which is a contradiction.

Finally, if $x_{1}$ is of dominant type $S,$ with eigenvalue $\alpha
_{1}\neq \pm 1$ having eigenspace of dimension $S_{x_{1}}$, and no other $%
x_{j}$ is type $\,S$ or $BD$, then we begin with $V_{1}$ the eigenspace of $%
y_{1}$ corresponding to $\alpha _{1}$. Arguing as above, we deduce that
non-eligbility implies every element of $\prod_{i=1}^{L}C_{x_{i}}$ has an
eigenvector with eigenvalue equal to either $\pm \alpha _{1},$ again
impossible for a set with non-empty interior.

As we assumed $(x_{1},...,x_{L})$ was not eligible, these are the only cases
to consider.
\end{proof}

\section{Preliminary results towards proving absolute continuity}

Our proof that eligible tuples are absolutely continuous will proceed by
induction on the rank of the Lie group. This will require associating each $%
x $ in the torus of \ $G_{n}$ with some $x^{\prime }$ in the torus of $%
G_{n-1}$, as was done for elements of the torus of the Lie algebra in \cite%
{CJM}.

\begin{notation}
Assume $x\in T_{n}$ is as in (\ref{x}). We call $x^{\prime }$ $\in T_{n-1}$
the \textbf{reduction of }$x$ when
\end{notation}

\begin{equation*}
x^{\prime }=\left\{ 
\begin{array}{cc}
(\underbrace{1,...,1}_{u-1},\underbrace{-1,...,-1}_{v},\underbrace{%
e^{ia_{1}},...,e^{ia_{1}}}_{s_{1}},...,\underbrace{e^{ia_{m}},...,e^{ia_{m}}}%
_{s_{m}}) & \text{ if }x\text{ is dominant }B\text{ or }BD \\ 
(\underbrace{1,...,1}_{u},\underbrace{-1,...,-1}_{v-1},\underbrace{%
e^{ia_{1}},...,e^{ia_{1}}}_{s_{1}},...,\underbrace{e^{ia_{m}},...,e^{ia_{m}}}%
_{s_{m}}) & \text{ if }x\text{ is dominant }D \\ 
(\underbrace{1,...,1}_{u},\underbrace{-1,...,-1}_{v},\underbrace{%
e^{ia_{1}},...,e^{ia_{1}}}_{s_{1}-1},...,\underbrace{%
e^{ia_{m}},...,e^{ia_{m}}}_{s_{m}}) & 
\begin{array}{c}
\text{ if }x\text{ is dominant }S\text{ and } \\ 
s_{1}=\max s_{j}%
\end{array}%
\end{array}%
\right. .
\end{equation*}

We can embed $\mathfrak{t}_{n-1}$ into $\mathfrak{t}_{n}$ by taking the
standard basis vectors $e_{1},...,e_{n}$ for $\mathfrak{t}_{n}$ and omiting
the first for $\mathfrak{t}_{n-1}$. This gives a natural embedding of $\Phi
_{n-1}$ into $\Phi _{n},$ an embedding of $\mathfrak{g}_{n-1}$ into $%
\mathfrak{g}_{n},$ $G_{n-1}$ into $G_{n}$ and $T_{n-1}$ into $T_{n}$. It is
in this way that we view $x^{\prime }$ as an element of $T_{n-1}$.

Observe that if $x$ is of dominant type $BD,$ then $x^{\prime }$ is either
of dominant type $D$ or $S$. Also, $x$ is central if and only if $x^{\prime
} $ is central. We record here some other simple facts.

\begin{lemma}
\label{1}(a) If $x$ is either of dominant type $BD$ or\ $S,$ then $\mu
_{x}^{2}\in L^{2}$.

(b) If $x$ is of dominant type $BD$ or $S,$ then $\mu _{x^{\prime }}^{2}\in
L^{2}$.
\end{lemma}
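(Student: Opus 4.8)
The plan is to reduce each statement to a single inequality on the dimension of the conjugacy class and then verify that inequality by a root count. Writing $C_x\cong G_n/Z_x$, the density of $\mu_x\ast\mu_x$ is the pushforward of the smooth product measure under the multiplication map $\Phi\colon C_x\times C_x\to G_n$, $(a,b)\mapsto ab$; as in the Lie algebra analysis of \cite{CJM}, this density lies in $L^{2}(G_n)$ once the surplus $2\dim C_x-\dim\mathfrak g_n$ is at least the rank $n$, because then $\Phi$ is generically a submersion with fibres of dimension $\ge n$ and its critical values form a set of codimension large enough for the coarea density to be square integrable. Equivalently, since $x$ and $x^{-1}$ are conjugate in $SO(2n+1)$ we have $\widetilde{\mu_x^2}=\mu_x^2$ and $\|\mu_x^2\|_2^2=\mu_x^{\ast4}(e)$, so the same surplus makes the four-fold product map $C_x^{4}\to G_n$ submersive along the fibre over $e$, giving a bounded density there. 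Using $\dim C_x=|\mathcal N_x|=2(n^2-|\Phi_x^+|)$ and $\dim\mathfrak g_n=2n^2+n$, the required surplus inequality is equivalent to
\begin{equation*}
|\Phi_x^+|\le\tfrac12(n^2-n)=\binom n2 .
\end{equation*}

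For part (a), read off from the Example that an $x$ of type $B_u\times D_v\times SU(s_1)\times\cdots\times SU(s_m)$ has
\begin{equation*}
|\Phi_x^+|=u^2+v(v-1)+\sum_{j}\binom{s_j}{2},\qquad u+v+\sum_j s_j=n .
\end{equation*}
I would then maximize the right-hand side subject to the dominant-type constraints. For dominant type $S$ (where $s:=\max_j s_j\ge 2u+1$ and $s\ge 2v$) and for dominant type $BD$ (where $u=v$ and $2v\ge s$), a short optimization shows the maximum equals exactly $\binom n2$, attained by the single block $s_1=n$ in the first case and by $u=v=n/2$, $s_j=0$ in the second, and strictly smaller for every other admissible configuration. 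This yields $|\Phi_x^+|\le\binom n2$, hence $\mu_x^2\in L^2$.

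For part (b), I would run the same count on the reduction. If $x$ is dominant $S$ with $s_1=\max_j s_j$, then $x'$ is the single-block reduction in $T_{n-1}$; if $x$ is dominant $BD$, then $x'$ drops $u$ by one and is of dominant type $D$ or $S$ in $G_{n-1}$. In each case the reduction lowers $|\Phi_{x'}^+|$ by exactly the right amount, so that $|\Phi_{x'}^+|\le\binom{n-1}{2}$, i.e. $2\dim C_{x'}-\dim\mathfrak g_{n-1}\ge n-1$, which is the rank of $G_{n-1}$. The same $L^2$ criterion then gives $\mu_{x'}^2\in L^2(G_{n-1})$.

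The main obstacle is that the criterion is tight: the extreme elements sit exactly on the boundary $2\dim C_x-\dim\mathfrak g_n=n$, so the real work is to establish square-integrability of the pushforward density at the threshold, when the generic fibre of $\Phi$ has dimension only $n$. This needs control of the order of vanishing of the normal Jacobian of $\Phi$ along its singular locus together with a bound on the codimension of that locus; where $\Phi$ fails to be submersive is governed by the spanning condition of Proposition \ref{charabscont}(iii) applied to the root vectors $\mathcal N_x$, and isolating this boundary behaviour is the part I expect to require the most care.
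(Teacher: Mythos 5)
Your reduction of the lemma to the inequality $|\Phi_x^+|\le\binom{n}{2}$ (equivalently, $2\dim C_x-\dim\mathfrak{g}_n\ge n$), and the root-count maximizations verifying that inequality for dominant types $S$ and $BD$, are correct as far as they go. But the analytic claim they feed into --- that the surplus condition $2\dim C_x-\dim\mathfrak{g}_n\ge n$ by itself forces $\mu_x^2\in L^2$ --- is not merely unproved at the threshold, as you acknowledge; it is false, strictly inside the region where your criterion holds. Take $n=10$ and $x$ of type $D_7$ in $B_{10}$ (eigenvalue $-1$ with multiplicity $14$, the remaining eigenvalues regular). Then $|\Phi_x^+|=7\cdot 6=42\le\binom{10}{2}=45$, so your criterion is satisfied with room to spare. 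Yet $x$ is of dominant type $D$ with $S_x=14$, and the pair $(x,x)$ has an even number of dominant-$D$ entries, so eligibility would require $2S_x=28\le(2n+1)(L-1)+p-1=22$, which fails. By Proposition \ref{nec}, $\mu_x\ast\mu_x$ is then not even absolutely continuous, let alone in $L^2$. More generally, for pure type $D_v$ your count allows $v$ up to roughly $n/\sqrt{2}$, while $\mu_x^2$ is purely singular whenever $v>(n+1)/2$. This shows the gap is structural, not a matter of care at the boundary: any argument that uses only the generic-fiber dimension count (submersion plus coarea, critical-locus codimension, etc.) would apply verbatim to the $D_7$ element above and "prove" a false statement. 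Square-integrability here is governed by anisotropic decay of character ratios --- it depends on \emph{which} roots annihilate $x$, not just on how many --- so no purely dimensional criterion can isolate the $BD$ and $S$ types from the dominant-$D$ types that your inequality also admits.

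The paper's own proof avoids all of this analysis: it simply cites the sharp $L^2$-singular dichotomy of Gupta--Hare \cite[Thm.\ 9.1(B)]{Adv}, whose statement encodes exactly this finer root-system information, and verifies its hypotheses, namely that a dominant type $BD$ element must be of type $B_{n/2}\times D_{n/2}$ or $B_u\times D_u\times SU(s_1)\times\cdots\times SU(s_m)$ with $u<n/2$, with the analogous check for type $S$ and for the reductions $x'$ in part (b). If you want a self-contained route, you would in effect have to reprove the relevant cases of that dichotomy theorem (via character estimates or Fourier transform decay along singular directions), which is a substantially harder undertaking than the lemma itself; the dimension count cannot substitute for it.
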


\begin{proof}
(a) This follows directly from Theorem 9.1(B) of \cite{Adv} with the
observation that if $x$ is of dominant type $BD,$ then $x$ is either type $%
B_{n/2}\times D_{n/2}$ or type $B_{u}\times D_{u}\times SU(s_{1})\times
\cdot \cdot \cdot \times SU(s_{m})$ where $u<n/2$.

(b) If $x$ as in (\ref{x}) is of dominant type $BD$, then $x^{\prime }$ is
type $B_{u-1}\times D_{u}\times SU(s_{1})\times \cdot \cdot \cdot \times
SU(s_{m})$. Unless $\sum s_{j}=0,1,$ we are in the `else' case in the
notation of \cite[Thm. 9.1(B)]{Adv} and consequently $\mu _{x^{\prime
}}^{2}\in L^{2}$. If $\sum s_{j}=0$ (or $1),$ then $u=n/2$ (resp., $%
u=(n-1)/2)$ and one can check from \cite[Thm. 9.1(B)]{Adv} that both
situations imply $\mu _{x^{\prime }}^{2}\in L^{2}$.

The reasoning is similar if $x$ is of dominant type $S$ and is left to the
reader.
\end{proof}

\begin{corollary}
(a) If $x$ and $y$ are both of dominant types $BD$ or $S,$ then $\mu
_{x}\ast \mu _{y}$ and $\mu _{x^{\prime }}\ast \mu _{y^{\prime }}\in L^{2}.$

(b) If $x$ is of dominant type $B$ or $D$ and changes type upon reduction,
then $\mu _{x^{\prime }}^{2}\in L^{2}$.
\end{corollary}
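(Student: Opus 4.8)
The plan for (a) is to extract a single harmonic-analytic principle and then feed in Lemma \ref{1}: if $\mu $ and $\nu $ are central (conjugation-invariant) probability measures on $G$ with $\mu \ast \mu ,\nu \ast \nu \in L^{2}$, then $\mu \ast \nu \in L^{2}$. I would prove this on the dual side. Since $\mu _{x}$ is central, Schur's lemma forces its Fourier transform to be scalar on each irreducible $\pi $, namely $\widehat{\mu _{x}}(\pi )=\frac{\chi _{\pi }(x)}{d_{\pi }}I_{d_{\pi }}$ with $d_{\pi }=\dim \pi $. Then by Plancherel,
\[
\|\mu _{x}\ast \mu _{y}\|_{2}^{2}=\sum_{\pi }\frac{|\chi _{\pi }(x)|^{2}|\chi _{\pi }(y)|^{2}}{d_{\pi }^{2}},\qquad \|\mu _{x}\ast \mu _{x}\|_{2}^{2}=\sum_{\pi }\frac{|\chi _{\pi }(x)|^{4}}{d_{\pi }^{2}},
\]
and Cauchy--Schwarz applied to the splitting $\frac{|\chi _{\pi }(x)|^{2}}{d_{\pi }}\cdot \frac{|\chi _{\pi }(y)|^{2}}{d_{\pi }}$ gives $\|\mu _{x}\ast \mu _{y}\|_{2}^{2}\leq \|\mu _{x}^{2}\|_{2}\,\|\mu _{y}^{2}\|_{2}$. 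For the first assertion of (a), Lemma \ref{1}(a) supplies $\mu _{x}^{2},\mu _{y}^{2}\in L^{2}$; for the second, Lemma \ref{1}(b) supplies $\mu _{x^{\prime }}^{2},\mu _{y^{\prime }}^{2}\in L^{2}$. In both cases the right-hand side is finite, so the convolution lies in $L^{2}$.

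For (b) the plan is to reduce the claim to Lemma \ref{1}(a) applied to $x^{\prime }$ (an element of $T_{n-1}$) in place of $x$; the only thing to verify is that the hypothesis forces $x^{\prime }$ to be of dominant type $BD$ or $S$. This is purely the combinatorics of Definition \ref{dom}. Suppose $x$ is dominant $B$, so $u>v$ and $2u+1>s$ with $s=\max s_{j}$, and reduction replaces $(u,v,s)$ by $(u-1,v,s)$. Reading Definition \ref{dom} off for $x^{\prime }$, it remains dominant $B$ exactly when $u-1>v$ and $2u-1>s$; I would check that the complementary situations, $u-1=v$ or $s\in \{2u-1,2u\}$, always place $x^{\prime }$ in dominant type $BD$ (when $v=u-1$ and $2(u-1)\geq s$) or $S$ (otherwise). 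The dominant $D$ case is the mirror image, replacing $(u,v,s)$ by $(u,v-1,s)$ and using $v>u$, $2v>s$. In either case $x^{\prime }$ is dominant $BD$ or $S$, and Lemma \ref{1}(a) then yields $\mu _{x^{\prime }}^{2}\in L^{2}$.

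The routine part is (a): once the scalar form of the Fourier transform is in hand, the estimate is a one-line Cauchy--Schwarz. The step I expect to be the real bookkeeping is (b), namely confirming that ``changes type'' is synonymous with ``$x^{\prime }$ becomes dominant $BD$ or $S$'' --- that is, that losing one of the strict inequalities defining dominant $B$ (resp.\ $D$) can only produce the mixed type $BD$ or the singular type $S$, and never the opposite pure type. The essential observation making this work is the elementary one that $u>v$ forces $u-1\geq v$, so upon removing a $1$ the comparison between $u-1$ and $v$ can only collapse to equality, never reverse; this is exactly what rules out $x^{\prime }$ turning into a dominant $D$ element (and symmetrically for the $D$ case).
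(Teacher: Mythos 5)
Your proposal is correct and follows essentially the same route as the paper: part (a) is exactly the paper's ``Plancherel plus H\"older'' argument (your Cauchy--Schwarz on the scalar Fourier coefficients $\chi _{\pi }(x)/d_{\pi }$ is that argument written out), fed by Lemma \ref{1}(a) and (b), and part (b) is the paper's one-line observation that a change of dominant type under reduction forces $x^{\prime }$ to be of dominant type $BD$ or $S$, after which Lemma \ref{1}(a) applies to $x^{\prime }\in T_{n-1}$. The only difference is that you make explicit the dual-side computation and the type-change combinatorics (in particular, that $u>v$ gives $u-1\geq v$, ruling out a switch to the opposite pure type), details the paper leaves to the reader.
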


\begin{proof}
(a) This is immediate from parts (a) and (b) of the previous Lemma upon
applying Plancherel's theorem and Holder's inequality.

(b) For this we merely need to observe that the change of dominant type of \ 
$x$ upon reduction implies $x^{\prime }$ is either of dominant type $BD$ or $%
S$.
\end{proof}

An important property for our induction argument is that eligibility is
preserved under reduction.

\begin{proposition}
\label{reductioneligible}If $(x_{1},...,x_{L})$ in $G_{n}$ is eligible, then
so is the reduced tuple $(x_{1}^{\prime },...,x_{L}^{\prime })$ in $G_{n-1}$.
\end{proposition}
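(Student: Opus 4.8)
The plan is to turn the statement into elementary arithmetic on largest-eigenspace dimensions, organised by the four clauses of Definition \ref{eligible}. The first observation is that reduction lowers the rank from $n$ to $n-1$, so $2n+1$ is replaced by $2n-1$ and the right-hand side of each eligibility inequality drops by exactly $2(L-1)$. Thus it suffices to control how the left-hand sides change. I would also keep at hand the ambient identity coming from the fact that $x$ acts on $\mathbb{R}^{2n+1}$, namely $2u+1+2v+2\sum_j s_j=2n+1$, i.e. $\sum_j s_j=n-u-v$; this bounds every eigenspace dimension in terms of $n$ and will be essential in the one genuinely rigid case.

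The engine is a per-element dictionary, read off from Definition \ref{dom} and the reduction rules, recording how the dominant type and the quantity $S_x$ (or the pair $S_x^{(1)},S_x^{(2)}$) transform for a single $x$. The clean entries are: if $x$ has dominant type $B$ or $D$ and $x'$ keeps that type, then the relevant $1$- or $(-1)$-eigenspace loses two dimensions and $S_{x'}=S_x-2$. The delicate entries are all the others: a type-$B$ or type-$D$ element may acquire dominant type $S$ or $BD$, in which case the governing quantity drops by only $1$; a type-$S$ element reduces with $S_{x'}=S_x-1$, except that $S_{x'}=S_x$ when the maximal $s_j$ is attained twice, and a borderline type-$S$ element (with $s=2u+1$ or $s=2v$) may instead become type $B$ or $D$ with $S_{x'}=S_x$; finally a type-$BD$ element always becomes type $D$ or $S$ with $S_{x'}=S_x^{(2)}=S_x^{(1)}-1$. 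Alongside this I would track parity, noting that exactly the transitions $D\to S,BD$, $S\to D$ and $BD\to D$ alter the number of dominant-$D$ elements by one and so flip $p$.

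With this dictionary, clauses (ii)--(iv) are handled directly. Here all but at most one element has dominant type $B$ or $D$, and the majority of them keep that type, each shedding two dimensions; this supplies essentially the full required drop of $2(L-1)$. What remains is to control the deficient elements---the at most one element of type $S$ or $BD$, and any element whose type changes under reduction---and I would argue that a deficiency is always absorbed: the reduced tuple either gains a second element of type $S$ or $BD$ and falls under clause (i) with nothing to prove, or passes to clause (ii)/(iii), where the loss of the parity surplus is offset by the deficit that was shed, or stays in clause (iv) but with a parity flip supplying exactly the slack needed. Several of these inequalities are tight, and this is where one sees that the term $p-1$ in clause (iv) and the parity-dependent choice $S^{(p)}_{x_1}$ in clause (iii) of Definition \ref{eligible} are calibrated precisely so that eligibility is preserved.

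The main obstacle is clause (i), where the original tuple carries no inequality at all. If the reduced tuple still has two elements of type $S$ or $BD$ it is again eligible by clause (i); the real work is the case where reduction drives it out of clause (i), forcing at least one original type-$S$ or $BD$ element to become type $B$ or $D$. Such elements are exactly the borderline ones above, and here the absence of an a priori bound on $\sum S_{x_i}$ must be replaced by the ambient identity $\sum_j s_j=n-u-v$: for a borderline element the value $S_{x'}=2u+1$ or $2v$ is then forced to be of order $\tfrac{2}{3}n$ rather than $2n$, and summing these rank-constrained bounds against the reduced right-hand side $(2n-1)(L-1)+p'-1$ (using $L\ge 2$) yields eligibility. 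Verifying this estimate in all subconfigurations, together with the parity bookkeeping of the previous paragraph, is the crux of the proof.
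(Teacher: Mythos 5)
Your architecture is genuinely different from the paper's, and it is worth being clear about how. The paper never does arithmetic in any case where two elements are ``deficient'': whenever two or more $x_{j}$ are of dominant type $BD$ or $S$ (your clause (i)), or two or more switch dominant type under reduction, or one of each occurs, it invokes Lemma \ref{1} and its Corollary (which rest on the $L^{2}$-dichotomy results of \cite{Adv}) to conclude that $\mu _{x_{1}^{\prime }}\ast \cdots \ast \mu _{x_{L}^{\prime }}\in L^{2}\subseteq L^{1}$, and then deduces eligibility of the reduced tuple from Proposition \ref{nec}, i.e.\ from the already-proved fact that absolute continuity forces eligibility. Only the cases with at most one deficiency remain, and there the paper does exactly the arithmetic you describe for clauses (ii)--(iv), using the same dictionary ($S_{x^{\prime }}=S_{x}-2$ for a type preserved, a drop of at least $1$ for a switch, $S_{x^{\prime }}\leq S_{x}$ for type $S$, $S_{x^{\prime }}=S_{x}^{(2)}$ for type $BD$). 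Your plan replaces the harmonic-analytic shortcut by direct counting throughout, which would make the proposition elementary and self-contained; that is a legitimate alternative, and your transformation dictionary and parity bookkeeping agree with the paper's.

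However, there is a genuine gap at exactly the point you call the crux. Your quantitative claim --- that an element forced out of type $S$/$BD$ by reduction has $S_{x^{\prime }}$ of order $2n/3$ --- is false for type-$BD$ elements reducing to type $D$: there $S_{x^{\prime }}=2u=2v$ and the only constraint is $2u\leq n$, with equality attained by $x$ of type $B_{n/2}\times D_{n/2}$ ($n$ even). The $2n/3$ bound is valid only for borderline type-$S$ elements. This matters because with the correct bound $n$ the inequalities you must verify have \emph{zero} slack. For example, for $L=2$ and $x_{1},x_{2}$ both of type $B_{n/2}\times D_{n/2}$ (eligible by clause (i)), the reduced pair consists of two dominant type-$D$ elements with $S=n$ each, and clause (iv) in $G_{n-1}$ demands $2n\leq (2(n-1)+1)(1)+p^{\prime }-1=2n$: equality, and only because the reduced parity is even. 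Similarly, one element of type $B_{n/2}\times D_{n/2}$ paired with one of type $SU(n)$ reduces to a type-$D$/type-$S$ pair needing $n+(n-1)\leq 2n-1$, again an equality; and the analogous triples with a type-$D_{n}$ element appended are tight as well. So the step ``summing these rank-constrained bounds against the reduced right-hand side yields eligibility'' cannot be waved through: the conclusion is true, but it holds exactly, the parity term $p^{\prime }-1$ is indispensable, and the case analysis you explicitly defer is precisely where all of the difficulty sits. As written, the proposal both asserts slack that does not exist and leaves the tight verification undone; either repair the bound and carry out the full subconfiguration analysis, or do what the paper does and buy out clause (i) with the $L^{2}$ argument plus Proposition \ref{nec}.
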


\begin{proof}
Note that Lemma \ref{1} and its Corollary imply that if $x$ is of dominant
type $BD$ or $S,$ or $x$ and $x^{\prime }$ are different dominant types,
then $\mu _{x^{\prime }}^{2}\in L^{2}$. Consequently, $\mu _{x_{1}^{\prime
}}\ast \cdot \cdot \cdot \ast \mu _{x_{L}^{\prime }}\in L^{2}$ $\subseteq
L^{1},$ and hence $(x_{1}^{\prime },...,x_{L}^{\prime })$ is eligible, if
any of the following situations occur:

(i) two or more $x_{j}$ are of dominant type $BD$ or $S$;

(ii) two or more $x_{j}$ switch dominant type upon reduction;

(iii) one $x_{j}$ is of dominant type $BD$ or $S$ and another switches
dominant type upon reduction.

Thus there are two cases that we need to analyze.

Case 1: All $x_{j}$ are of dominant type $B$ or $D$ and at most one switches
dominant type upon reduction.

Case 2: One $x_{j}$ is of dominant type $BD$ or $S$ and no other $x_{i}$
switches dominant type upon reduction.

We remark that it is a routine exercise to check that if $x$ is of dominant
type $B$ or $D$ and $x$ does not switch dominant type upon reduction, then $%
S_{x^{\prime }}=S_{x}-2$, while if $x$ switches dominant type, then $%
S_{x^{\prime }}\leq S_{x}-1$ (where if $x^{\prime }$ is of dominant type $BD$
by `$S_{x^{\prime }}\leq S_{x}-1$' we mean both coordinates of $S_{x^{\prime
}}$ are dominated by $S_{x}-1$). If $x$ is of dominant type $S,$ then $%
S_{x^{\prime }}\leq S_{x}$ (again, meaning both coordinates if $x^{\prime }$
is of dominant type $BD)$ and if $x$ is of dominant type $BD,$ then $%
S_{x^{\prime }}=S_{x}^{(2)}=\min (S_{x}^{(1)},S_{x}^{(2)}).$ (In this final
situation, $x^{\prime }$ cannot be type $BD$.)

\medskip

Proof of Case 1: Regardless of the parity of the tuple, eligibility
certainly implies 
\begin{equation*}
\sum_{i=1}^{L}S_{x_{i}}^{{}}\leq (2n+1)(L-1)+1\text{. }
\end{equation*}%
Assume $x_{1}$ is the one that switches dominant type (if any do). Then 
\begin{eqnarray*}
\sum_{i=1}^{L}S_{x_{i}^{\prime }} &\leq
&S_{x_{1}}^{{}}-1+\sum_{i=2}^{L}(S_{x_{i}}^{{}}-2)=%
\sum_{i=1}^{L}S_{x_{i}}^{{}}-2(L-1)-1 \\
&\leq &(2(n-1)+1)(L-1),
\end{eqnarray*}%
so $(x_{1}^{\prime },...,x_{L}^{\prime })$ is eligible in $G_{n-1}$.

\medskip

Proof of Case 2: If $x_{1}$ is of dominant type $BD,$ then eligibility
implies%
\begin{equation*}
S_{x_{1}}^{(p)}+\sum_{i=2}^{L}S_{x_{i}}^{{}}\leq (2n+1)(L-1)\text{ }
\end{equation*}%
where $p$ is the parity of the tuple. As $S_{x_{i}^{\prime }}=S_{x_{i}}-2$
for $i\neq 1$ and $S_{x_{1}^{\prime }}\leq S_{x_{1}}^{(p)},$ we have%
\begin{equation*}
\sum_{i=1}^{L}S_{x_{i}^{\prime }}\leq \sum_{i=1}^{L}S_{x_{i}}-2(L-1)\leq
(2(n-1)+1)(L-1),
\end{equation*}%
and hence the tuple is eligible.

The arguments are similar if $x_{1}$ is of dominant type $S$.
\end{proof}

Our proof of the sufficiency of eligibility for absolute continuity will
make heavy use of the following Proposition, which we refer to as the
general strategy. Its proof relies on Proposition \ref{charabscont} and is
essentially the same as given in Proposition 5.6 in \cite{CJM} in the Lie
algebra setting and is omitted.

\begin{notation}
For $x\in G$ or $X\in \mathfrak{g}$ put%
\begin{equation*}
\Omega _{x}=\mathcal{N}_{x}\diagdown \mathcal{N}_{x^{\prime }}\text{ and }%
\Omega _{X}^{\mathfrak{g}}=\mathcal{N}_{X}^{\mathfrak{g}}\diagdown \mathcal{N%
}_{X^{\prime }}^{\mathfrak{g}}
\end{equation*}%
where we use the natural embedding described above.
\end{notation}

\begin{proposition}
\label{generalstrategy}\textrm{\ }(General strategy) Let $L\geq 2$. Let $%
x_{i}\in T_{n}$ for $i=1,\dots ,L$ and assume $(x_{1}^{\prime },\dots
,x_{L}^{\prime })$ is an absolutely continuous tuple in $G_{n-1}$. Suppose 
\begin{equation}
\Omega \subseteq \{RE_{\alpha },IE_{\alpha }:\alpha \in \Phi
_{n}^{+}\diagdown \Phi _{n-1}^{+}\},  \label{Omega}
\end{equation}%
$\Omega $ contains each $\Omega _{x_{i}}$ and $\Omega $ has the property
that $ad(H)(\Omega )\subseteq sp\Omega $ whenever $H\in \mathfrak{g}_{n-1}$.
Fix $\Omega _{0}\subseteq \Omega _{x_{L}}$ and assume there exist $%
g_{1},\dots ,g_{L-1}\in G_{n-1}$ and $M\in \mathfrak{g}_{n}$ such that

\textrm{(i)} $sp(Ad(g_{i})(\Omega _{x_{i}}),\Omega _{x_{L}}\backslash \Omega
_{0}:i=1,\dots ,L-1)=sp\Omega ;$

\textrm{(ii)} $ad^{k}(M):\mathcal{N}_{x_{L}}\diagdown \Omega _{0}\rightarrow
sp(\Omega ,\mathfrak{g}_{n-1})$ for all positive integers $k$; and

\textrm{(iii)} The span of the projection of $Ad(\exp tM)(\Omega _{0})$ onto
the orthogonal complement of $sp(\mathfrak{g}_{n-1},\Omega )$ in $\mathfrak{g%
}_{n}$ is a surjection for all small $t\neq 0.$

Then $(x_{1},\dots ,x_{L})$ is an absolutely continuous tuple in $G_{n}$.
\end{proposition}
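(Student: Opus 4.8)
The plan is to verify the criterion of Proposition~\ref{charabscont}(iii) for the tuple $(x_1,\dots,x_L)$ in $G_n$, namely to produce elements $h_1,\dots,h_L\in G_n$ with $h_1=\mathrm{Id}$ such that $\mathrm{sp}(Ad(h_i)\mathcal{N}_{x_i}:i=1,\dots,L)=\mathfrak{g}_n$. By hypothesis $(x_1',\dots,x_L')$ is absolutely continuous in $G_{n-1}$, so the same proposition applied in $G_{n-1}$ gives $f_1,\dots,f_{L-1}\in G_{n-1}$ (with $f_1=\mathrm{Id}$) for which $\mathrm{sp}(Ad(f_i)\mathcal{N}_{x_i'}:i=1,\dots,L-1,\ \mathcal{N}_{x_L'})=\mathfrak{g}_{n-1}$. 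The strategy is then to decompose $\mathfrak{g}_n$ as $\mathfrak{g}_{n-1}\oplus W$, where $W$ is the orthogonal complement of $\mathfrak{g}_{n-1}$, and to show that after conjugating by appropriate elements the non-annihilating vectors $\mathcal{N}_{x_i}$ span $\mathfrak{g}_{n-1}$ on the one hand and fill out $W$ on the other.

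The key steps, in order, are as follows. First I would set $h_i=g_i$ for $i=1,\dots,L-1$ and $h_L=\exp(tM)$ for the fixed $M$ and small $t\neq 0$, where the $g_i$ are the elements supplied by hypotheses (i)--(iii) and should be chosen so that their restriction to $G_{n-1}$ recovers the $f_i$ coming from the $(n-1)$-dimensional absolute continuity. Second, I would observe that $\mathcal{N}_{x_i}=\mathcal{N}_{x_i'}\sqcup\Omega_{x_i}$ by the very definition of $\Omega_{x_i}$, so the span we are building receives contributions both from the $(n-1)$-level root vectors and from the $\Omega$-vectors that live in $W$ by condition~\eqref{Omega}. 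Third, using that each $g_i\in G_{n-1}$ and hence $Ad(g_i)$ preserves $\mathfrak{g}_{n-1}$, the $(n-1)$-level part gives back all of $\mathfrak{g}_{n-1}$ via the absolute continuity hypothesis in $G_{n-1}$ (this is where one needs the $g_i$ to extend the $f_i$). Fourth, it remains to show the images of the $\Omega$-vectors fill $W$: hypothesis~(i) handles the part of $W$ spanned by $\Omega\setminus(\text{image of }\Omega_0)$, while hypotheses~(ii) and~(iii) are exactly engineered so that $Ad(\exp tM)$ moves $\Omega_0$ to cover the remaining orthogonal directions in $W$ without destroying the span already achieved.

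The most delicate point is the interaction of the $\exp(tM)$-conjugation with the decomposition. Condition~(ii) guarantees that applying $Ad(\exp tM)=\sum_k ad^k(M)/k!$ to any vector of $\mathcal{N}_{x_L}\setminus\Omega_0$ stays inside $\mathrm{sp}(\Omega,\mathfrak{g}_{n-1})$, so those vectors do not leak into the orthogonal complement of $\mathrm{sp}(\mathfrak{g}_{n-1},\Omega)$ in an uncontrolled way; meanwhile condition~(iii) says that the $\Omega_0$ part, after projection onto that orthogonal complement, surjects for small $t\neq 0$. The main obstacle will be organizing the linear-algebra bookkeeping so that these two effects combine cleanly: one must confirm that the span obtained from $\{Ad(g_i)\mathcal{N}_{x_i},\ Ad(\exp tM)\mathcal{N}_{x_L}\}$ simultaneously contains $\mathfrak{g}_{n-1}$, contains $\mathrm{sp}\,\Omega\cap W$ via~(i), and contains the complementary directions via~(iii), with no double-counting and no dimension deficit. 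Since Proposition~\ref{generalstrategy} is asserted to follow by the same argument as Proposition~5.6 of~\cite{CJM}, I expect the proof to amount to transcribing that span-decomposition argument into the group setting and checking that $\Omega$ being $ad(\mathfrak{g}_{n-1})$-stable is precisely what lets $Ad(g_i)$ respect the splitting; I would therefore note the analogy and omit the routine verification, exactly as the authors do.
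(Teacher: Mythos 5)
Your overall architecture is right---reduce to Proposition \ref{charabscont}(iii), split $\mathfrak{g}_{n}$ as $\mathfrak{g}_{n-1}\oplus sp\,\Omega \oplus W_{0}$ (with $W_{0}$ the orthogonal complement of $sp(\mathfrak{g}_{n-1},\Omega )$), and let (i) account for $sp\,\Omega $, the reduced tuple for $\mathfrak{g}_{n-1}$, and (ii)+(iii) for $W_{0}$---and the paper itself gives no proof (it defers to Proposition 5.6 of \cite{CJM}), so there is nothing line-by-line to compare against. But your sketch has a genuine gap at the step where you ``choose the $g_{i}$ so that their restriction to $G_{n-1}$ recovers the $f_{i}$.'' The $g_{i}$ already lie in $G_{n-1}$, so no restriction is involved; more importantly, hypothesis (i) hands you one specific tuple $(g_{1},\dots ,g_{L-1})$ while the absolute continuity of $(x_{1}^{\prime },\dots ,x_{L}^{\prime })$ hands you a different, a priori unrelated tuple, and nothing in the statement lets you take them equal. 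The missing idea---the actual engine of the cited argument---is a genericity statement: for fixed finite sets of vectors, the set of tuples in $G_{n-1}^{L-1}$ realizing a given spanning condition is the complement of the zero set of a real-analytic function (a sum of squares of maximal minors) on a connected manifold, hence open and \emph{dense} as soon as it is non-empty. Applying this to both spanning conditions (the one in (i), and the one expressing absolute continuity of the reduced tuple, normalized by conjugation so that the $L$-th witness is the identity) produces a single tuple $(g_{1},\dots ,g_{L-1})$ satisfying both simultaneously. Without this, your two hypotheses never interact and the proof does not assemble.

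A second, smaller issue, which you wave off as ``bookkeeping,'' is where the quantifier ``for all small $t\neq 0$'' does real work: since $h_{L}=\exp (tM)$ with $M\notin \mathfrak{g}_{n-1}$, conjugation by $h_{L}$ moves \emph{all} of $\mathcal{N}_{x_{L}}$, including the vectors $\mathcal{N}_{x_{L}^{\prime }}$ that are supposed to help span $\mathfrak{g}_{n-1}$ and the vectors $\Omega _{x_{L}}\setminus \Omega _{0}$ that (i) uses unconjugated, so at $t\neq 0$ neither spanning hypothesis is literally available. The repair: by (ii) and the $ad(\mathfrak{g}_{n-1})$-invariance of $sp\,\Omega $, the space $V^{\prime }(t)=sp\bigl(Ad(g_{i})\mathcal{N}_{x_{i}}\ (i\leq L-1),\,Ad(\exp tM)(\mathcal{N}_{x_{L}}\setminus \Omega _{0})\bigr)$ stays inside $sp(\mathfrak{g}_{n-1},\Omega )$; at $t=0$ it equals $sp(\mathfrak{g}_{n-1},\Omega )$ by (i) together with the merged spanning tuple above; and since spanning is an open condition (lower semicontinuity of rank), $V^{\prime }(t)=sp(\mathfrak{g}_{n-1},\Omega )$ persists for small $t\neq 0$. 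Only then does (iii) supply the missing directions: any $w\in W_{0}$ is the projection of some $u\in sp(Ad(\exp tM)\Omega _{0})$, and $u-w\in sp(\mathfrak{g}_{n-1},\Omega )=V^{\prime }(t)$, so $w$ lies in the total span, which is therefore all of $\mathfrak{g}_{n}$. These two points---genericity and rank semicontinuity---are precisely what your sketch is missing.
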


The following elementary linear algebra result can be helpful in verifying
the hypotheses of this proposition. Its proof is essentially the same as
that of Lemma 5.7 of \cite{CJM}.

\begin{lemma}
\label{elem}Let $\Xi _{i},$ $i=1,...,L,$ $L\geq 2,$ be given and suppose $%
\Omega $ is as (\ref{Omega}). Assume $\Omega \supseteq \tbigcup_{i=1}^{L}\Xi
_{i}$ and has the property that $ad(H)(\Omega )\subseteq sp\Omega $ whenever 
$H\in \mathfrak{g}_{n-1}$. Fix $\Omega _{0}\subseteq \Xi _{L}$ and $\Omega
_{\ast }\subseteq \left( \Xi _{L}\tbigcap \tbigcup_{i=1}^{L-1}\Xi
_{i}\right) \diagdown \Omega _{0}$, and assume that for some $H_{0}\in 
\mathfrak{g}_{n-1},$%
\begin{equation*}
sp\left( ad(H_{0})\Omega _{\ast },\Xi _{L}\diagdown \Omega
_{0},\tbigcup\limits_{i=1}^{L-1}\Xi _{i}\diagdown \Omega _{\ast }\right)
=sp\Omega .
\end{equation*}%
Then, for small $t\neq 0,$%
\begin{equation*}
sp\left( Ad(\exp tH_{0})\tbigcup\limits_{i=1}^{L-1}\Xi _{i},\Xi
_{L}\diagdown \Omega _{0}\right) =sp\Omega .
\end{equation*}
\end{lemma}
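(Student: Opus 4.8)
The plan is to prove the two inclusions of the asserted span identity separately: the inclusion into $sp\Omega$ is immediate from the invariance hypothesis, while the reverse inclusion is obtained by a difference-quotient perturbation combined with lower semicontinuity of rank. Throughout, write $A_t=Ad(\exp tH_0)=\exp\big(t\,ad(H_0)\big)$, a linear automorphism of $\mathfrak{g}_n$.

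First I would record the easy inclusion. Since $ad(H_0)(\Omega)\subseteq sp\Omega$, an immediate induction gives $ad(H_0)^{k}(\Omega)\subseteq sp\Omega$ for every $k\geq 0$, so that $A_t(\Omega)=\sum_{k\geq 0}\tfrac{t^k}{k!}ad(H_0)^k(\Omega)\subseteq sp\Omega$ for all $t$. As $\bigcup_{i=1}^{L-1}\Xi_i\subseteq\Omega$ and $\Xi_L\diagdown\Omega_0\subseteq\Xi_L\subseteq\Omega$, this already shows
$$sp\left(A_t\bigcup_{i=1}^{L-1}\Xi_i,\ \Xi_L\diagdown\Omega_0\right)\subseteq sp\Omega\quad\text{for all }t.$$
It therefore suffices to prove that this span has dimension at least $\dim sp\Omega$ for all sufficiently small $t\neq 0$.

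For the reverse inclusion I would exploit that $\Omega_*$ is visible on both sides. By hypothesis $\Omega_*\subseteq(\Xi_L\cap\bigcup_{i=1}^{L-1}\Xi_i)\diagdown\Omega_0$, so $\Omega_*\subseteq\Xi_L\diagdown\Omega_0$ and $\Omega_*\subseteq\bigcup_{i=1}^{L-1}\Xi_i$; hence for each $v\in\Omega_*$ both $v$ and $A_tv$ belong to the generating set of the left-hand span, and consequently so does the difference quotient $\tfrac{1}{t}\big(A_tv-v\big)$ for $t\neq 0$. I would then introduce the $t$-parametrised family consisting of the vectors $v$ for $v\in\Xi_L\diagdown\Omega_0$, the vectors $A_tw$ for $w\in\bigcup_{i=1}^{L-1}\Xi_i\diagdown\Omega_*$, and the vectors $\tfrac{1}{t}\big(A_tv-v\big)$ for $v\in\Omega_*$. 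Every member of this family lies in the left-hand span for $t\neq 0$, each depends analytically on $t$ with the apparent singularity at $t=0$ removable since $\tfrac{1}{t}\big(A_tv-v\big)\to ad(H_0)v$ as $t\to 0$, and at $t=0$ the family specialises to $\Xi_L\diagdown\Omega_0$, $\bigcup_{i=1}^{L-1}\Xi_i\diagdown\Omega_*$ and $ad(H_0)\Omega_*$, whose joint span is exactly $sp\Omega$ by the standing hypothesis of the lemma.

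I would finish with a semicontinuity argument. Arranging the coordinates of the family (in a fixed basis of $\mathfrak{g}_n$) as the columns of a matrix $\mathcal{M}(t)$ with entries analytic in $t$, the function $t\mapsto\operatorname{rank}\mathcal{M}(t)$ is lower semicontinuous, and by the previous paragraph $\operatorname{rank}\mathcal{M}(0)=\dim sp\Omega$. Hence $\operatorname{rank}\mathcal{M}(t)\geq\dim sp\Omega$ for all $t$ in a neighbourhood of $0$; since every column lies in $sp\Omega$ the rank is also bounded above by $\dim sp\Omega$, so it equals $\dim sp\Omega$ there. For such $t\neq 0$ the columns of $\mathcal{M}(t)$ lie in the left-hand span, which must therefore have dimension $\dim sp\Omega$; together with the inclusion of the first step this gives equality. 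The step I expect to be the main obstacle is the bookkeeping guaranteeing that the difference-quotient vectors genuinely lie in the left-hand span — which is exactly where the hypotheses $\Omega_*\subseteq\Xi_L\diagdown\Omega_0$ and $\Omega_*\subseteq\bigcup_{i=1}^{L-1}\Xi_i$ are used — and arranging the family so that the $t\to 0$ limit recovers precisely the spanning set appearing in the hypothesis; the lower semicontinuity of rank is then routine.
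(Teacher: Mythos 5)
Your proof is correct, and it uses exactly the mechanism this lemma rests on: the difference quotients $\tfrac{1}{t}(Ad(\exp tH_{0})v-v)$ for $v\in \Omega _{\ast }$ (legitimate because $\Omega _{\ast }$ sits in both $\Xi _{L}\diagdown \Omega _{0}$ and $\tbigcup_{i=1}^{L-1}\Xi _{i}$), their limit $ad(H_{0})v$ as $t\rightarrow 0$, and lower semicontinuity of rank, with the easy inclusion into $sp\Omega $ coming from the $ad(H_{0})$-invariance of $sp\Omega $. The paper itself omits the proof, deferring to Lemma 5.7 of \cite{CJM}, whose argument is essentially this same first-order perturbation of $Ad(\exp tH_{0})$ combined with the openness of linear independence, so your attempt matches the intended proof.
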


\section{Characterizing absolute continuity}

\subsection{Proof of the characterization theorem}

\begin{theorem}
\label{main}Assume $x_{j},$ $j=1,..,L$ are non-central, torus elements of $%
G_{n}$. The tuple $(x_{1},...,x_{L})$ is eligible if and only if $\mu
_{x_{1}}\ast \cdot \cdot \cdot \ast \mu _{x_{L}}$ is absolutely continuous.
\end{theorem}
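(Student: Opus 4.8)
The plan is to prove the two directions separately. Necessity of eligibility is already established in Proposition \ref{nec}, so the entire burden is to show that \emph{eligibility implies absolute continuity}. I would prove this by induction on the rank $n$ of the group $G_n$, following the strategy developed for the Lie algebra case in \cite{CJM}. The base case ($n$ small, e.g. $n=1$ or $n=2$) would be handled directly, checking the finitely many eligible type-configurations by hand using Proposition \ref{charabscont}(iii) and the explicit root vectors of $B_n$. For the inductive step, given an eligible tuple $(x_1,\dots,x_L)$ in $G_n$, Proposition \ref{reductioneligible} guarantees that the reduced tuple $(x_1',\dots,x_L')$ is eligible in $G_{n-1}$, hence absolutely continuous by the inductive hypothesis. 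The goal is then to \emph{lift} this absolute continuity from $G_{n-1}$ to $G_n$ using the General Strategy, Proposition \ref{generalstrategy}.

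The core of the argument is therefore to verify the three hypotheses of Proposition \ref{generalstrategy}. First I would choose $\Omega$ to be the full set of root vectors $\{RE_\alpha, IE_\alpha : \alpha \in \Phi_n^+ \setminus \Phi_{n-1}^+\}$, or a suitable $\mathfrak{g}_{n-1}$-invariant subspace thereof containing every $\Omega_{x_i}$; the roots in $\Phi_n^+ \setminus \Phi_{n-1}^+$ are precisely $e_1$, $e_1 \pm e_j$ ($j>1$), which is exactly where the $B_n$-versus-$A_n$ difference lives. The invariance condition $ad(H)(\Omega) \subseteq sp\,\Omega$ for $H \in \mathfrak{g}_{n-1}$ follows from the Weyl basis bracket relations (\ref{Weyl}), since bracketing with a root vector of $\Phi_{n-1}$ keeps the first coordinate of the resulting root fixed. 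Hypotheses (ii) and (iii), which govern the behaviour of $ad^k(M)$ and the exponential flow $Ad(\exp tM)$ on the designated piece $\Omega_0$, should be arrangeable by choosing $M$ to be a single well-chosen root vector (or real/imaginary part thereof) whose bracket action moves vectors out of $sp(\mathfrak{g}_{n-1},\Omega)$ in a controlled, surjective manner onto the orthogonal complement. Lemma \ref{elem} is the tool that converts the static spanning condition one can check at the Lie-algebra level into the dynamic one required by hypothesis (i) of the General Strategy.

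The genuinely hard part — and the reason the paper is organized around dominant \emph{type} — is hypothesis (i): exhibiting $g_1,\dots,g_{L-1} \in G_{n-1}$ and a splitting $\Omega_0 \subseteq \Omega_{x_L}$ so that the conjugated non-annihilating root vectors together span all of $\Omega$. This must be checked case by case according to the dominant types $B$, $D$, $S$, $BD$ of the $x_i$ and the parity $p$ of the tuple, since each dominant type contributes a different collection $\Omega_{x_i}$ of surviving root vectors upon reduction. I expect the $D_v$ and $BD$ cases to be the chief obstacle: because $\Phi_n$ (type $B_n$) contains irreducible $D_j$ subsystems, an element of dominant type $D$ annihilates the short roots $\pm e_k$ in its block, so its $\Omega_{x}$ behaves asymmetrically compared to types $B$ and $S$, and the eligibility inequality in Definition \ref{eligible} is correspondingly shifted by the parity term $p-1$ and by the choice of $S_{x_1}^{(p)}$ in the $BD$ case. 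The bulk of the proof will consist of organizing these configurations — presumably isolating one distinguished index (the dominant-$S$ or dominant-$BD$ element, when present) to play the role of $x_L$ and absorb the $\Omega_0$ piece — and a dimension count showing that eligibility provides exactly the $(2n+1)(L-1)$-type budget needed for the spanning in (i) to succeed.

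Two structural simplifications would streamline the induction. By Example following Definition \ref{eligible}, a central $x_1$ can be stripped off without affecting eligibility, and by Corollary to Lemma \ref{1}, whenever two or more $x_i$ are of dominant type $BD$ or $S$ (or enough of them switch type on reduction) the reduced convolution already lies in $L^2 \subseteq L^1$, so the General Strategy is not even needed — these are exactly cases (i)--(iii) in the proof of Proposition \ref{reductioneligible}. This confines the delicate lifting argument to the remaining two configurations (Case 1: all types $B$/$D$ with at most one switching; Case 2: one type $BD$/$S$ and no other switching), which is where I would concentrate the case analysis outlined above.
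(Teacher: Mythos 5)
Your skeleton---induction on the rank, reduction plus Proposition \ref{reductioneligible} and the inductive hypothesis, the $L^{2}$ shortcut when two or more $x_{i}$ are of dominant type $BD$ or $S$ (Lemma \ref{1} and its Corollary), and the General Strategy with $\Omega =\{RE_{\alpha },IE_{\alpha }:\alpha \in \Phi _{n}^{+}\setminus \Phi _{n-1}^{+}\}$, a single root vector $M$, and Lemma \ref{elem}---is exactly the architecture of the paper's proof. The genuine gap sits precisely where you defer to ``organizing these configurations'' and ``a dimension count'': you propose to verify hypothesis (i) of Proposition \ref{generalstrategy} directly in every dominant-type configuration, including odd parity and a single $BD$ element. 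The paper never does this, and its key device for avoiding it is absent from your proposal. If the number of elements of dominant type $D$ is odd, the paper replaces one such $x_{1}$, of type $B_{u_{1}}\times D_{v_{1}}\times SU(s_{1,1})\times \cdots$, by $y_{1}$ of type $B_{v_{1}}\times D_{u_{1}}\times SU(s_{1,1})\times \cdots$; then $\mathcal{N}_{y_{1}}\subseteq \mathcal{N}_{x_{1}}$, so by Proposition \ref{charabscont} absolute continuity of $(y_{1},x_{2},...,x_{L})$ implies that of $(x_{1},...,x_{L})$, and the eligibility inequalities (including the parity term $p-1$) are checked to coincide. Likewise a single $BD$ element is simply re-read as type $B$ or type $D$ according to the parity of the tuple. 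Only after these reductions does the paper run the General Strategy, and only in two situations (Lemma \ref{MainLemma}): all $B$/$D$ with an \emph{even} number of $D$'s, or one $S$ and the rest $B$/$D$; moreover, when no dominant-$D$ element is present ($m=0$) it does not argue by hand at all but imports the spanning results of \cite{CJM} via the identity $\Omega _{X_{x_{i}}}^{\mathfrak{g}}=\Omega _{x_{i}}$---another ingredient your outline omits.

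This is not bookkeeping: the even-parity hypothesis is what makes the spanning in (i) succeed. The only vectors of $\Omega$ attached to the short root $e_{1}$ are $RE_{e_{1}},IE_{e_{1}}$, and they belong to $\Omega _{x_{i}}$ only when $x_{i}$ is of dominant type $D$ (or $S$). The paper's covering argument takes $\Omega _{0}=\{RE_{e_{1}},IE_{e_{1}}\}$, spends one dominant-$D$ copy of these vectors on $\Omega _{0}$, one to cover $\Omega$, and uses the remaining $m-2$ copies in pairs---two per missing index, moved by $Ad(\exp tRE_{e_{j}})$ and $Ad(\exp tIE_{e_{j}})$---a count that balances exactly because $m$ is even and eligibility then carries the $+m$ slack. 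When $m$ is odd (most starkly $m=1$: one $D$, the rest $B$), there is no second copy of $RE_{e_{1}},IE_{e_{1}}$ anywhere among the $\Omega _{x_{i}}$, so after removing $\Omega _{0}$ the remaining sets cannot cover $\Omega$ as written; one would instead have to manufacture the short-root directions from $G_{n-1}$-conjugates of long-root vectors of the type-$B$ elements, a construction of a different kind that neither your proposal nor the paper carries out. The swap trick makes this case disappear; without it, the dimension count you invoke does not by itself close the induction.
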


We already saw the necessity of eligibility in Proposition \ref{nec}, thus
we only need to prove sufficiency. This will be a proof by induction on $n,$
the rank of $G_{n},$ and we begin with the base case, $n=2$.

\begin{lemma}
\label{ind}All eligible tuples of non-central torus elements in $G_{2}$ are
absolutely continuous.

\begin{proof}
It is shown in Theorem 9.1 of \cite{Adv} that if $x\in G_{2}\diagdown
Z(G_{2})$, then $\mu _{x}^{2}\in L^{2},$ except if $x$ is type $D_{2}$ when $%
\mu _{x}^{4}\in L^{2}$. Thus, if $L\geq 4,$ it follows from Plancherel's
theorem and Holder's inequality that $(x_{1},...,x_{L})$ is absolutely
continuous. Similarly, if $L=3$ and only two $x_{j}$ are type $D_{2},$ then
the triple is absolutely continuous. If all three $x_{j}$ are type $D_{2},$
the triple is not eligible. That proves all eligible triples are absolutely
continuous.

Likewise, any pair with neither $x_{1}$ nor $x_{2}$ of type $D_{2}$ is
absolutely continuous. The only eligible pairs for which $x_{1}$ is type $%
D_{2}$ are the pairs $(x_{1},x_{2})$ where $x_{2}$ is regular, and Theorem
1.3 of \cite{Wr} implies that such pairs are absolutely continuous.
\end{proof}
\end{lemma}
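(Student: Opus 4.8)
The plan is to treat the base case as essentially combinatorial: in $G_2$ the rank-two constraint $u+v+\sum_j s_j=2$ leaves only finitely many dominant types for a non-central torus element, so the statement reduces to a short case analysis driven by a single analytic input. That input is the self-convolution estimate of \cite[Thm. 9.1]{Adv}: every non-central $x\in G_2$ has $\mu_x^2\in L^2$, with the sole exception of the type $D_2$ element $(-1,-1)$, for which one only obtains $\mu_x^4\in L^2$. Since $G_2$ is compact, $L^2\subseteq L^1$, so to prove a tuple absolutely continuous it suffices to show the convolution product lies in $L^2$. Because orbital measures are central, their Fourier transforms are scalar on each irreducible (equal to $\chi_\pi(x)/d_\pi$ times the identity), so Plancherel turns every $L^2$-membership claim into the convergence of a character series, and products of orbital measures are controlled by H\"older's inequality applied to that series. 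I would also repeatedly use $|\chi_\pi(x)|\le d_\pi$ to discard surplus factors, which shows $\|\mu_{x_1}\ast\cdots\ast\mu_{x_L}\|_2$ does not increase when extra non-central factors are convolved in.

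First I would dispose of all $L\ge 4$. Every non-central $x$ satisfies $\mu_x^4\in L^2$ (for the non-$D_2$ elements this is immediate from $\mu_x^2\in L^2$, since a convolution of two $L^2$ functions is continuous, hence $L^2$). Applying H\"older with exponent $4$ to any four factors, and then dropping the remaining factors via the bound $|\chi_\pi|\le d_\pi$, places the full product in $L^2$; thus every $L\ge 4$ tuple is absolutely continuous, eligible or not. For $L=3$ I would split on the number of $D_2$ factors: if all three are $D_2$ the triple fails Definition \ref{eligible}(iv) (one checks $12>10$) and so is non-eligible, whence every eligible triple has at least one non-$D_2$ factor with $\mu^2\in L^2$; a mixed H\"older estimate with exponents $(4,4,2)$ on the at most two $D_2$ factors and the remaining factor then gives $L^2$-membership. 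Hence all eligible triples are absolutely continuous.

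The genuinely delicate case is $L=2$, which I expect to be the main obstacle. If neither $x_1$ nor $x_2$ is $D_2$, both have $\mu_{x_i}^2\in L^2$ and a Cauchy--Schwarz estimate on the character series gives $\mu_{x_1}\ast\mu_{x_2}\in L^2$ directly. The obstruction is a pair in which one element, say $x_1$, is the $D_2$ element $(-1,-1)$: here the self-convolution bounds are too weak, since convolving the singular $\mu_{x_1}$ with only one further orbital measure need not land in $L^2$ (the H\"older exponents forced by $\mu_{x_1}^4\in L^2$ no longer match $\mu_{x_2}^2\in L^2$). To handle this I would first run the eligibility bookkeeping of Definition \ref{eligible} against the finite list of types in $G_2$, checking as in the worked examples following that definition that the only eligible pairs containing $(-1,-1)$ are the pairs $(x_1,x_2)$ with $x_2$ regular (either $x_2=(-1,e^{ia})$, dominant type $D$ with $S_{x_2}=2$, or $x_2=(e^{ia_1},e^{ia_2})$, dominant type $S$ with $S_{x_2}=1$). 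For these I would invoke a genuinely different, geometric input, namely \cite[Thm. 1.3]{Wr} on pairs with a regular factor, which yields the absolute continuity of $(x_1,x_2)$. The crux of the lemma is therefore not the analytic pairing argument but this last reduction: verifying that eligibility in $G_2$ forces the $D_2$ partner to be regular, and then importing the regular-pair result in place of an $L^2$ self-convolution bound.
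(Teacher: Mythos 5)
Your proposal is correct and follows essentially the same route as the paper: the same input from \cite[Thm. 9.1]{Adv}, Plancherel plus H\"older for $L\geq 3$ and for non-$D_{2}$ pairs, the eligibility bookkeeping ruling out the triple of $D_{2}$'s and the non-regular partners of a $D_{2}$ element, and \cite[Thm. 1.3]{Wr} for the remaining $D_{2}$--regular pairs. Your added details (explicit H\"older exponents, the bound $|\chi_{\pi}(x)|\leq d_{\pi}$ to drop factors, and the explicit list of eligible $D_{2}$ pairs) simply flesh out steps the paper leaves terse.
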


The major work in proving the theorem is done in the next lemma.

\begin{lemma}
\label{MainLemma}Suppose $n\geq 3$. Assume that all eligible tuples of
non-central elements in $G_{n-1}$ are absolutely continuous and that $%
(x_{1},...,x_{L})$ is eligible in $G_{n}$.

(a) If all $x_{j}$ are either of dominant type $B$ or $D,$ and the number of 
$x_{j}$ of dominant type $D$ is even, then $(x_{1},...,x_{L})$ is absolutely
continuous.

(b) If one $x_{j}$ is of dominant type $S$ and none are of dominant type $%
BD, $ then $(x_{1},...,x_{L})$ is absolutely continuous.
\end{lemma}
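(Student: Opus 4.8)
The plan is to establish both parts by feeding the inductive machinery already assembled into the general strategy, Proposition \ref{generalstrategy}. The first move is to pass to the reduced tuple $(x_1',\dots,x_L')$ in $G_{n-1}$. By Proposition \ref{reductioneligible} this tuple is eligible, and after deleting any factor that happens to reduce to a central element (harmless, since a central factor may be dropped without affecting eligibility or the absolute continuity of the convolution), the standing induction hypothesis gives that $(x_1',\dots,x_L')$ is absolutely continuous. This is exactly the input required by Proposition \ref{generalstrategy}, so the entire problem collapses to a spanning question about the root vectors attached to the roots that involve the deleted coordinate $e_1$.

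Accordingly I would take $\Omega=\{RE_{\alpha},IE_{\alpha}:\alpha\in\Phi_n^+\setminus\Phi_{n-1}^+\}$, the $4n-2$ root vectors belonging to the $2n-1$ roots $e_1$ and $e_1\pm e_j$. The invariance $ad(H)(\Omega)\subseteq sp\Omega$ for $H\in\mathfrak{g}_{n-1}$ is immediate from the Weyl relations (\ref{Weyl}): bracketing against an $e_1$-free vector cannot alter the $e_1$-coefficient, which equals $\pm1$ on every element of $\Omega$. A dimension count then shows $sp(\mathfrak{g}_{n-1},\Omega)$ is all of $\mathfrak{g}_n$ except the one-dimensional $e_1$-direction of $\mathfrak{t}_n$, so the induction together with $\Omega$ already accounts for everything but this last Cartan line.

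To capture that remaining line I would choose the distinguished index $L$ so that $\Omega_{x_L}$ contains a pair $RE_{\alpha},IE_{\alpha}$ with $\alpha$ involving $e_1$: take $x_L$ to be the type-$S$ factor in part (b) and, in part (a), a type-$D$ factor when one is present, using $\alpha=e_1$ in those cases and $\alpha=e_1+e_j$ for a suitable $j$ when every factor is type $B$. Setting $\Omega_0=\{RE_{\alpha},IE_{\alpha}\}$ and $M=RE_{\alpha}$, the bracket $[RE_{\alpha},IE_{\alpha}]$ is a nonzero element of $\mathfrak{t}_n$ with nonzero $e_1$-component, so $Ad(\exp tM)IE_{\alpha}=IE_{\alpha}+t[RE_{\alpha},IE_{\alpha}]+O(t^2)$ has nonzero projection onto the $e_1$-line for small $t\neq0$, giving condition (iii). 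Condition (ii) holds because the only bracket of $M$ with a root vector that produces a Cartan element of nonzero $e_1$-component is $[RE_{\alpha},IE_{\alpha}]$, and $IE_{\alpha}$ lies in $\Omega_0$ and is never regenerated by iterated bracketing of $M$ against $\mathcal{N}_{x_L}\setminus\Omega_0$, since $ad(M)$ only ever produces root vectors indexed by $\alpha\pm\beta$ with $\beta\neq 0$.

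The crux, and the step I expect to be the main obstacle, is condition (i): after conjugating by suitable $g_i\in G_{n-1}$, the vectors $Ad(g_i)\Omega_{x_i}$ for $i<L$ together with $\Omega_{x_L}\setminus\Omega_0$ must span all of $sp\Omega$, a space of dimension $4n-2$. The bookkeeping is favorable: one computes $|\Omega_{x_i}|=(4n+2)-2S_{x_i}$ for dominant type $B$ or $D$ and $|\Omega_{x_i}|=4n-2S_{x_i}$ for dominant type $S$, and then the eligibility inequality (condition (iv) of Definition \ref{eligible} with parity $p=2$, so that $\sum S_{x_i}\le(2n+1)(L-1)+1$, in part (a), and condition (ii) of Definition \ref{eligible} in part (b)) yields $\sum_i|\Omega_{x_i}|\ge 4n$, leaving at least $4n-2$ vectors once the two of $\Omega_0$ are set aside, exactly matching $\dim sp\Omega$. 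Turning this count into genuine linear independence after conjugation is the real work: I would use Lemma \ref{elem} to replace the several conjugations by a single one-parameter family $Ad(\exp tH_0)$ and then argue root by root that the annihilating subsystems $\Phi_{x_i}$ leave enough independent $e_1\pm e_j$ directions. The genuine difficulty, flagged in the introduction, is that for a type-$D$ factor the annihilating roots form a $D_v$ subsystem rather than $B_v$ or $A_v$, so the surviving $RE_{e_1+e_j}$ and $RE_{e_1-e_j}$ directions are constrained asymmetrically; controlling exactly which survive, and exploiting that $SO(2n+1)$ always carries a single $+1$ eigenvector — precisely the slack that the even number of type-$D$ factors provides in part (a) — is where the argument must proceed by a careful case analysis.
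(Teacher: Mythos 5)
Your setup coincides exactly with the paper's: the same $\Omega$, the reduction to $(x_{1}^{\prime },...,x_{L}^{\prime })$ via Proposition \ref{reductioneligible} and the induction hypothesis (the worry about factors reducing to central elements is vacuous, since the paper notes $x$ is central if and only if $x^{\prime }$ is central), the choices $\Omega _{0}=\{RE_{e_{1}},IE_{e_{1}}\}$ and $M=RE_{e_{1}}$, and your verifications of conditions (ii) and (iii) are essentially the paper's. The genuine gap is precisely where you write that ``the real work'' lies: condition (i) is never established, and the dimension count you offer cannot substitute for it. The count $\sum_{i}|\Omega _{x_{i}}|\geq 4n$ is arithmetically correct but ignores the heavy overlaps among the $\Omega _{x_{i}}$: the same vectors $FE_{e_{1}\pm e_{j}}$ recur in many factors, and even after Weyl conjugations making the index sets $J_{i}$ disjoint, whole pairs $FE_{e_{1}\pm e_{j}}$ can be absent from $\tbigcup_{i}\Omega _{x_{i}}$ --- eligibility only bounds the number of missing indices (by $m/2-1$ in part (a), where $m$ is the number of dominant type $D$ factors). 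The substance of the paper's proof is the mechanism that manufactures these missing directions: each dominant type $D$ factor carries a redundant copy of $FE_{e_{1}}$, and conjugating $\Omega _{2j-1}$ by $\exp (tRE_{e_{j}})$ and $\Omega _{2j}$ by $\exp (tIE_{e_{j}})$ produces the two independent combinations $\pm c_{t}FE_{e_{1}+e_{j}}+d_{t}FE_{e_{1}-e_{j}}$, so that two type $D$ factors are spent per missing index --- this is where the evenness of $m$ actually enters, not, as you suggest, via the fixed $+1$ eigenvector of $SO(2n+1)$ (that fact is used only in the necessity argument, Proposition \ref{nec}). In part (b) a further device is required: the type $S$ factor supplies all of the $FE_{e_{1}+e_{k}}$, and one applies Lemma \ref{elem} with the explicit element $H_{0}=\sum_{k=2}^{N+1}RE_{e_{k}+e_{n-N-1+k}}$ and $\Omega _{\ast }=\{FE_{e_{1}+e_{k}}:k=2,...,N+1\}$ to rotate those onto the missing $FE_{e_{1}-e_{j}}$. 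You name Lemma \ref{elem} but never exhibit an $H_{0}$ or $\Omega _{\ast }$, so the step that carries the whole proof is absent.

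A second, independent gap is the case in which every $x_{j}$ is of dominant type $B$ (permitted in part (a), and the $m=0$ subcase of part (b)). There no $\Omega _{x_{i}}$ contains any copy of $FE_{e_{1}}$, so the copy-shuffling idea has nothing to work with, and your remark that one should take $\alpha =e_{1}+e_{j}$ ``for a suitable $j$'' comes with no spanning argument at all. The paper resolves this case by a different route: for dominant type $B$ elements one has $\Omega _{x_{i}}=\Omega _{X_{x_{i}}}^{\mathfrak{g}}$, group eligibility implies the Lie-algebra eligibility condition (\ref{LieElig}), and property (i) is imported wholesale from the Lie algebra proof in \cite{CJM}, with $\Omega _{0}=\{RE_{e_{1}+e_{n}},IE_{e_{1}+e_{n}}\}$ and $M=RE_{e_{1}+e_{n}}$. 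Without either this appeal or a replacement argument, your proposal leaves that case unproved as well.
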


\begin{proof}
For both (a) and (b) we will use the general strategy, Proposition \ref%
{generalstrategy}, taking 
\begin{equation*}
\Omega :=\{RE_{\alpha },IE_{\alpha }:\alpha \in \Phi _{n}^{+}\diagdown \Phi
_{n-1}^{+}\}
\end{equation*}%
\begin{equation*}
=\{FE_{e_{1}},FE_{e_{1}\pm e_{j}}:j=2,...,n,F=R,I\}.
\end{equation*}%
Note that the orthogonal complement of $sp(\mathfrak{g}_{n-1},\Omega )$ in $%
\mathfrak{g}_{n}$ is the one dimensional space spanned, for example, by the
torus element $[RE_{e_{1}},IE_{e_{1}}]$ or $%
[RE_{e_{1}+e_{n}},IE_{e_{1+}e_{n}}]$.

Since $(x_{1},...,x_{L})$ is eligible in $G_{n}$, Proposition \ref%
{reductioneligible} shows that the reduced tuple, $(x_{1}^{\prime
},...,x_{L}^{\prime })$ in $G_{n-1},$ is eligible. By the hypothesis of the
lemma, it is an absolutely continuous tuple.

\smallskip

(a) Without loss of generality, we can assume $x_{j}$ are of dominant type $%
D $ for $j=1,...,m$ and $x_{j}$ are of dominant type $B$ for $j=m+1,...,L,$
with $m$ even. In this situation, eligibility simplifies to 
\begin{equation}
\sum_{i=1}^{m}2v_{i}+\sum_{i=m+1}^{L}2u_{i}\leq 2n(L-1)+m.  \label{El1}
\end{equation}

\medskip We begin with the case $m\neq 0,$ so $m\geq 2$. We will put $\Omega
_{0}=\{RE_{e_{1}},IE_{e_{1}}\}$ and we have 
\begin{equation*}
\ \Omega _{x_{i}}^{{}}=\left\{ 
\begin{array}{cc}
\{FE_{e_{1}},FE_{e_{1}\pm e_{j}}:j\in J_{i,}F=R,I\} & \text{for }i=1,...,m
\\ 
\{FE_{e_{1}\pm e_{j}}:j\in J_{i},F=R,I\} & \text{for }i=m+1,...,L%
\end{array}%
\right. ,
\end{equation*}%
where the sets $J_{i}\subseteq \{2,...,n\},$ $\left\vert J_{i}\right\vert
=n-v_{i}$ for $i=1,...,m$, and $\left\vert J_{i}\right\vert =n-u_{i}$ for $%
i=m+1,...,L$. For notational ease, we will write $\Omega _{i}=\Omega
_{x_{i}}^{{}}$.

Clearly, $\Omega _{i}\subseteq \Omega ,$ $\Omega _{0}\subseteq $ $\Omega
_{1}\cap \Omega _{2}$ and $ad(H)(\Omega )\subseteq sp\Omega $ for all $H\in 
\mathfrak{g}_{n-1}$.

By replacing $x_{i}$ by Weyl conjugates that permute the appropriate
indices, if necessary, (or, equivalently, replacing $\Omega _{i}$ by $%
Ad(g_{i})\Omega _{i}$ where $g_{i}\in G_{n-1}$ is a suitable Weyl
conjugation), there is no loss of generality in assuming the sets $J_{i}$
are disjoint (as much as possible).

In particular, if $\sum_{i=1}^{L}\left\vert J_{i}\right\vert \geq n-1,$ then 
$\tbigcup_{i=1}^{L}J_{i}=\{2,...,n\}$ and in that case since $RE_{e_{1}}$, $%
IE_{e_{1}}\in \Omega _{2},$ 
\begin{equation*}
(\Omega _{1}\backslash \Omega _{0})\tbigcup \tbigcup\limits_{i=2}^{L}\Omega
_{i}=\Omega ,
\end{equation*}%
so property (i) of the general strategy, Proposition \ref{generalstrategy},
holds.

Put $M=RE_{e_{1}}.$ Property (iii) of the general strategy holds since 
\begin{equation*}
Ad(\exp tM)IE_{e_{1}}=a_{t}IE_{e_{1}}+tb_{t}[RE_{e_{1}},IE_{e_{1}}]
\end{equation*}%
where $b_{t}\rightarrow b\neq 0$ as $t\rightarrow 0$. Moreover, $%
ad(M)(FE_{e_{1}\pm e_{j}})\in spFE_{e_{j}}$ for $j\neq 1$, $%
ad(M)(FE_{e_{i}\pm e_{j}})=0$ if neither $i,j$ $=1$ and $ad(M)(FE_{e_{j}})%
\in sp(FE_{e_{1}\pm e_{j}})$ if $j\neq 1$. It follows that 
\begin{equation*}
ad^{k}(M):\mathcal{N}_{x_{1}}\diagdown \Omega _{0}\subseteq sp(\Omega ,%
\mathfrak{g}_{n-1}),
\end{equation*}%
so (ii) of the general strategy holds. Consequently, Proposition \ref%
{generalstrategy} implies $(x_{1},...,x_{L})$ is an absolutely continuous
tuple.

\smallskip

So assume $\sum \left\vert J_{i}\right\vert <n-1$. The number of indices
from $\{2,...,n\}$ that are missing from $\tbigcup_{i=1}^{L}J_{i}$ is equal
to 
\begin{equation*}
n-1-\left( \sum_{i=1}^{m}(n-v_{i})+\sum_{i=m+1}^{L}(n-u_{i})\right)
=n(1-L)-1+\sum_{i=1}^{m}v_{i}+\sum_{i=m+1}^{L}u_{i}
\end{equation*}%
and the eligibility assumption (\ref{El1}) ensures this is bounded above by 
\begin{equation*}
n(1-L)-1+n(L-1)+\frac{m}{2}=\frac{m}{2}-1\text{.}
\end{equation*}%
Thus $(\Omega _{1}\backslash \Omega _{0})\tbigcup \tbigcup_{i=2}^{L}\Omega
_{i}$ contains all of $\Omega ,$ except possibly as many as $m/2-1$ of the
pairs $FE_{e_{1}\pm e_{j}}.$ Moreover, we must have $m\geq 4$ since we are
assuming that some indices are missing. There is no loss of generality in
assuming that it is the indices $j\in \{2,...,K\}$ with $K\leq m/2$ which
have the property that $FE_{e_{1}\pm e_{j}}$ does not belong to $(\Omega
_{1}\backslash \Omega _{0})\tbigcup \tbigcup_{i=2}^{L}\Omega _{i}$.

The idea is now to use the fact that $FE_{e_{1}}$ occurs in each of $\Omega
_{1},...,\Omega _{m}$. One of these copies will be needed to `cover' $\Omega
,$ one will be used to obtain the torus element as above, and we will see
that the other $m-2$ copies can be used to obtain the pairs $FE_{e_{1}\pm
e_{j}}\in \Omega $ that are not present in $(\Omega _{1}\backslash \Omega
_{0})\tbigcup \tbigcup_{i=2}^{L}\Omega _{i}$.

An important observation towards this is that since $%
ad(RE_{e_{j}})(FE_{e_{1}})=cFE_{e_{1}+e_{j}}+dFE_{e_{1}-e_{j}}$ and $%
ad(IE_{e_{j}})(FE_{e_{1}})=-cF^{\prime }E_{e_{1}+e_{j}}+dF^{\prime
}E_{e_{1}-e_{j}}$ where $F^{\prime }=I$ if $F=R$ and vice versa, we have%
\begin{eqnarray*}
Ad(\exp tRE_{e_{j}})(FE_{e_{1}})
&=&a_{t}FE_{e_{1}}+t(c_{t}FE_{e_{1}+e_{j}}+d_{t}FE_{e_{1}-e_{j}})\text{ and}
\\
Ad(\exp tIE_{e_{j}})(FE_{e_{1}}) &=&a_{t}^{\prime }F^{\prime
}E_{e_{1}}+t(-c_{t}F^{\prime }E_{e_{1}+e_{j}}+d_{t}F^{\prime
}E_{e_{1}-e_{j}})\text{ }
\end{eqnarray*}%
where $c_{t},d_{t}$ converge to $c,d\neq 0$ respectively as $t\rightarrow 0$%
. Since the pairs 
\begin{equation*}
c_{t}FE_{e_{1}+e_{j}}+d_{t}FE_{e_{1}-e_{j}},-c_{t}FE_{e_{1}+e_{j}}+d_{t}FE_{e_{1}-e_{j}}
\end{equation*}%
are linearly independent for small $t\neq 0$, it follows that 
\begin{equation*}
sp(FE_{e_{1}},Ad(\exp tRE_{e_{j}})(FE_{e_{1}}),Ad(\exp
tIE_{e_{j}})(FE_{e_{1}}):F=R,I)
\end{equation*}%
\begin{equation*}
=sp(FE_{e_{1}},FE_{e_{1}\pm e_{j}}:F=R,I).
\end{equation*}%
Moreover, $Ad(\exp tF^{\prime }E_{e_{j}})(FE_{e_{1}\pm e_{k}})=FE_{e_{1}\pm
e_{k}}$ if $F^{\prime }=R,I$ and $j\neq k$.

It follows from these observations that for small $t\neq 0$ and $j\in
\{2,...,K\},$%
\begin{equation*}
sp(FE_{e_{1}},Ad(\exp tRE_{e_{j}})(\Omega _{2j-1}),Ad(\exp
tIE_{e_{j}})(\Omega _{2j}):F=R,I)
\end{equation*}%
\begin{equation*}
=sp(FE_{e_{1}},FE_{e_{1}\pm e_{j}},FE_{e_{1}\pm e_{i}}:i\in J_{2j-1}\cup
J_{2j},F=R,I).
\end{equation*}%
Since $FE_{e_{1}}\in \Omega _{2},$ we conclude that $sp(\Omega )$ is equal
to 
\begin{equation*}
sp(\Omega _{1}\backslash \Omega _{0},\Omega _{2},\Omega _{2K+1},...,\Omega
_{L},Ad(\exp tRE_{e_{j}})(\Omega _{2j-1}),Ad(\exp tIE_{e_{j}})(\Omega
_{2j}):j=2,...,K).
\end{equation*}%
That establishes property (i) of the general strategy. We again take $%
M=RE_{e_{1}}$ to complete the argument as we did in the case when there were
no missing indices.

\smallskip \smallskip

Now assume $m=0$. Here we will appeal to the Lie algebra argument of \cite%
{CJM}. Consider $X_{i}=X_{x_{i}}$ for $i=1,...,L$. These are of dominant
type $B$ in the Lie algebra $\mathfrak{g}_{n}$. They are eligible in the Lie
algebra sense (\ref{LieElig}) since 
\begin{equation*}
\sum_{i=1}^{L}S_{X_{i}}^{\mathfrak{g}_{n}}=\sum_{i=1}^{L}(S_{X_{i}}-1)\geq
(2n+1)(L-1)+1-L=2n(L-1).
\end{equation*}

The proof given in Proposition 5.8 of \cite{CJM} for $\mathfrak{g}_{n}$ of
type $B_{n}$ and $L=2,$ or in Section 6 of \cite{CJM} for $L\geq 3,$ shows
that under the eligibility assumption there exist $g_{i}\in G_{n-1}$ such
that 
\begin{equation*}
sp(Ad(g_{i})(\Omega _{x_{i}}^{\mathfrak{g}}),\Omega _{x_{L}}^{\mathfrak{g}%
}\backslash \Omega _{0}:i=1,\dots ,L-1)=sp\Omega ^{\mathfrak{g}}
\end{equation*}%
where 
\begin{eqnarray*}
\Omega ^{\mathfrak{g}} &:&=\{RE_{\alpha },IE_{\alpha }:\alpha \in \Phi
_{n}^{+\mathfrak{g}}\diagdown \Phi _{n-1}^{+\mathfrak{g}}\}=\Omega \text{
and } \\
\Omega _{0} &=&\{RE_{e_{1}+e_{n}},IE_{e_{1}+e_{n}}\}\subseteq \Omega
_{x_{L}}^{\mathfrak{g}}.
\end{eqnarray*}%
Since $\Omega _{x_{i}}^{\mathfrak{g}}=\Omega _{x_{i}}$, this establishes
property (i) of the general strategy and taking $M=RE_{e_{1}+e_{n}}$ we
complete the proof in a similar fashion to before.

That completes the proof of (a).

\medskip

(b) In this case, we can assume $x_{j}$ are of dominant type $D$ for $%
j=1,...,m$, $x_{j}$ are of dominant type $B$ for $j=m+1,...,L-1,$ and $x_{L}$
is of dominant type $S$. Eligibility tells us that 
\begin{equation}
\sum_{i=1}^{m}2v_{i}+\sum_{i=m+1}^{L-1}2u_{i}+s_{L}\leq 2n(L-1)+m.
\label{El2}
\end{equation}

First, assume $m\geq 1$. As before, we have%
\begin{equation*}
\ \Omega _{x_{i}}=\left\{ 
\begin{array}{cc}
\{FE_{e_{1}},FE_{e_{1}\pm e_{j}}:j\in J_{i},F=R,I\} & \text{for }i=1,...,m
\\ 
\{FE_{e_{1}\pm e_{j}}:j\in J_{i},F=R,I\} & \text{for }i=m+1,...,L-1%
\end{array}%
\right. ,
\end{equation*}%
where the sets $J_{i}\subseteq \{2,...,n\},$ $\left\vert J_{i}\right\vert
=n-v_{i}$ for $i=1,...,m$, and $\left\vert J_{i}\right\vert =n-u_{i}$ for $%
i=m+1,...,L-1$. But, in this case%
\begin{equation*}
\Omega _{x_{L}}=\{FE_{e_{1}},FE_{e_{1}-e_{j}},FE_{e_{1}+e_{k}},F=R,I:j\in
J_{L},k\geq 2\}
\end{equation*}%
where $|J_{L}|=n-S_{L}$. There is no loss of generality in assuming the sets 
$J_{i}$ are disjoint (as much as possible), so the cardinality of $%
\tbigcup_{i=1}^{L}J_{i}$ is the minimum of $n-1$ and%
\begin{equation*}
\sum_{i=1}^{m}(n-v_{i})+\sum_{i=m+1}^{L-1}(n-u_{i})+n-s_{L}.
\end{equation*}%
If $\left\vert \tbigcup_{i=1}^{L}J_{i}\right\vert =n-1,$ then already 
\begin{equation*}
\Omega =\tbigcup\limits_{i=1}^{L-1}\Omega _{x_{i}}\tbigcup \left( \Omega
_{x_{L}}\diagdown \{FE_{e_{1}}\}\right)
\end{equation*}%
and we can directly apply the general strategy with $M=RE_{e_{1}}$, as in
the first case.

\smallskip

So assume otherwise and choose $m-1$ indices from $\{2,...,n\}\diagdown
\tbigcup_{i=1}^{L}J_{i}$ (or all these indices if there are less than $m-1$%
), say the indices $k_{i}$ for $i=2,...,m^{\prime }$.

For $j\neq 1,$ 
\begin{equation*}
Ad(\exp
tRE_{e_{j}})(FE_{e_{1}})=a_{t}FE_{e_{1}}+tb_{t}FE_{e_{1}+e_{j}}+tc_{t}FE_{e_{1}-e_{j}}
\end{equation*}%
where $a_{t},b_{t},c_{t}$ converge to non-zero constants as $t\rightarrow 0$%
. For $\ell \neq j,$%
\begin{equation*}
Ad(\exp tRE_{e_{j}})(FE_{e_{1}\pm e_{\ell }})=FE_{e_{1}\pm e_{\ell }}.
\end{equation*}%
Put $g_{i}=\exp tRE_{e_{k_{i}}}\in G_{n-1}$ for $i=2,...,m^{\prime }$. Let $%
J_{0}\subseteq \{2,...,n\}$ consist of the union of the sets $J_{j},$ $%
j=1,...,L$, together with the additional indices $k_{i},$ $i=2,...,m^{\prime
}$. Since $\Omega _{x_{L}}$ contains all the root vectors of the form $%
FE_{e_{1}+e_{k}}$ and $\Omega _{x_{1}}$ contains $FE_{e_{1}},$ it follows
that for small $t\neq 0$,%
\begin{eqnarray*}
&&sp\left( \Omega _{x_{1}},\tbigcup\limits_{j=M+1}^{L-1}\Omega
_{x_{j}},\Omega _{x_{L}}\diagdown \{FE_{e_{1}}\},Ad(g_{i})(\Omega
_{x_{i}}):i=2,...,M,F=R,I\right) \\
&=&sp\left( FE_{e_{1}},FE_{e_{1}+e_{k}}:k\geq 2,FE_{e_{1}-e_{j}},j\in
J_{0},F=R,I\right) .
\end{eqnarray*}

If $m^{\prime }<m,$ then $J_{0}=\{2,...,n\}$ and we complete the proof by
taking $M=RE_{e_{1}}$ and appealing to the general strategy.

Otherwise, the number of indices in $\{2,...,n\}\diagdown J_{0}$ (we call
these the `missing' indices) is 
\begin{equation*}
N:=n-1-\left(
\sum_{i=1}^{m}(n-v_{i})+\sum_{i=m+1}^{L-1}(n-u_{i})+n-s_{L}+m-1\right) .
\end{equation*}%
The eligibility assumption (\ref{El2}) implies 
\begin{eqnarray*}
N &\leq
&n(1-L)+\sum_{i=1}^{m}2v_{i}+\sum_{i=m+1}^{L-1}2u_{i}+s_{L}-%
\sum_{i=1}^{m}v_{i}-\sum_{i=m+1}^{L-1}u_{i}-m \\
&\leq &n(L-1)-\sum_{i=1}^{m}v_{i}-\sum_{i=m+1}^{L-1}u_{i},
\end{eqnarray*}%
which coincides with the cardinality of $\tbigcup_{i=1}^{L-1}J_{i}$. As $%
\tbigcup_{i=1}^{L}J_{i}\subseteq J_{0},$ we deduce that $N\leq (n-1)/2$.
There is no loss of generality in assuming the missing indices are $%
\{n-N+1,...,n\}$ (in other words, $J_{0}=\{2,...,n-N\}$), and the indices in 
$\tbigcup_{i=1}^{L-1}J_{i}$ include $\{2,...,N+1\}$. Note that $N+1<n-N+1$.

These observations imply that 
\begin{equation*}
H_{0}:=\sum_{k=2}^{N+1}RE_{e_{k}+e_{n-N-1+k}}\in \mathfrak{g}_{n-1}
\end{equation*}%
is well defined. Furthermore, whenever $j\in \{2,...,N+1\},$ then%
\begin{equation*}
ad(H_{0})(FE_{e_{1}+e_{j}})=a_{F}FE_{e_{1}-e_{n-N-1+j}}
\end{equation*}%
where $a_{F}\neq 0$.

We will now appeal to Lemma \ref{elem}, where we take $\Xi
_{i}=Ad(g_{i})\Omega _{x_{i}}$ for $2\leq i\leq m$, $\Xi _{j}=\Omega _{x_{j}}
$ for $j=1$ and $m<j\leq L$, $\Omega _{0}=\{RE_{e_{1}},IE_{e_{1}}\}$ and 
\begin{equation*}
\Omega _{\ast }=\{FE_{e_{1}+e_{k}}:k=2,...,N+1,F=R,I\}.
\end{equation*}%
As the root vectors $FE_{e_{1}+e_{j}},$ for each choice of $j\in
\tbigcup_{i=1}^{L-1}J_{i},$ occur in both the set%
\begin{equation*}
\Omega _{x_{1}}\tbigcup \tbigcup\limits_{k=m+1}^{L-1}\Omega _{x_{k}}\tbigcup
\tbigcup\limits_{k=2}^{m}Ad(g_{k})\Omega _{x_{k}}
\end{equation*}%
and the set $\Omega _{x_{L}}$, we see that 
\begin{equation*}
\Omega _{\ast }\subseteq \left( \Xi _{L}\tbigcap
\tbigcup\limits_{i=1}^{L-1}\Xi _{i}\right) \diagdown \Omega _{0}.
\end{equation*}%
Since $J_{0}=\{2,...,n-N\}$ and%
\begin{equation*}
ad(H_{0})(\Omega _{\ast })=\{FE_{e_{1}-e_{k}}:k\geq n-N+1,F=R,I\},
\end{equation*}%
follows that 
\begin{eqnarray*}
&&sp\left( ad(H_{0})\Omega _{\ast },\Xi _{L}\diagdown \Omega
_{0},\tbigcup\limits_{i=1}^{L-1}\Xi _{i}\diagdown \Omega _{\ast }\right)  \\
&=&sp\left(
FE_{e_{1}},FE_{e_{1}+e_{i}},FE_{e_{1-}e_{j}},FE_{e_{1}-e_{k}}:i\geq 2,j\in
J_{0},k\geq n-N+1,F=R,I\right)  \\
&=&sp\Omega 
\end{eqnarray*}%
Thus Lemma \ref{elem} implies property (i) of the general strategy is
satisfied with the $g_{i}$ there replaced by $\exp tH_{0}\cdot g_{i}$ for $%
i=2,...,m$. We now complete the proof in the usual way with $M=RE_{e_{1}}$.

\smallskip

Lastly, suppose $m=0$ and put $X_{i}=X_{x_{i}}$. Then $X_{1},...,X_{L-1}$
are of dominant type $B$ in $\mathfrak{g}_{n}$ and $X_{L}$ is of dominant
type $SU(n)$. The eligibility condition for $(x_{1},...,x_{L})$ implies $%
(X_{x_{1}},...,X_{x_{L}})$ satisfies the eligibility requirement in the Lie
algebra setting (\ref{LieElig}) and since $\Omega _{X_{i}}^{\mathfrak{g}%
_{n}}=\Omega _{x_{i}}$, the proof given in \cite{CJM} again establishes
property (i) of the general strategy for $(x_{1},...,x_{L}),$ with a
suitable choice of $\Omega _{0}$. With an appropriate choice of $M$, as in 
\cite{CJM}, we complete the proof in the usual manner.
\end{proof}

\bigskip

\begin{proof}
\lbrack of Theorem \ref{main}] The necessity of eligibility was shown in
Proposition \ref{nec}, thus we only need prove its sufficiency. We proceed
by induction on the rank $n$ of $G_{n}$. Lemma \ref{ind} establishes the
base case, thus we may assume that $n\geq 3$ and that all eligible,
non-central tuples in $G_{n-1}$ are absolutely continuous.

Let $(x_{1},...,x_{L})$ be an eligible tuple with each $x_{j}\in T_{n}$. If
two or more $x_{i}$ are of dominant type $S$ or $BD,$ then according to
Lemma \ref{1} we even have $\mu _{x_{1}}\ast \cdot \cdot \cdot \ast \mu
_{x_{L}}\in L^{2},$ so the convolution is clearly absolutely continuous.

Thus we may assume at most one $x_{j}$ is of dominant type either $S$ or $BD$%
.

First, assume there are none. Suppose $x_{1},...,x_{m}$ are each of dominant
type $D$ and $x_{m+1},...,x_{L}$ are each of dominant type $B,$ with $x_{i}$
of type $B_{u_{i}}\times D_{v_{i}}\times SU(s_{i,1})\times \cdot \cdot \cdot
\times SU(s_{i},_{t_{i}})$. Simple calculation shows that if $m$ is odd,
then eligibility means 
\begin{equation}
\sum_{i=1}^{m}2v_{i}+\sum_{i=m+1}^{L}2u_{i}\leq 2n(L-1)+m-1\text{.}
\label{Elig1}
\end{equation}

Consider $y_{1}$ of type $B_{v_{1}}\times D_{u_{1}}\times SU(s_{1,1})\times
\cdot \cdot \cdot \times SU(s_{1,t_{1}})$. We have $v_{1}>u_{1}$ and $%
2v_{1}>s_{1},$ hence $y_{1}$ is of dominant type $B$. Furthermore, by taking
a Weyl conjugate, if necessary, we can assume $y_{1}$ has the same
annihilating roots of the form $e_{i}\pm e_{j}$ as $x_{1}$, and more
annihilating roots of the form $e_{k}$, so $\mathcal{N}_{x_{1}}\supseteq 
\mathcal{N}_{y_{1}}$. Consequently, Proposition \ref{charabscont} implies
that if $(y_{1},x_{2},...,x_{L})$ is absolutely continuous, so is $%
(x_{1},x_{2},...,x_{L})$. Moreover, as the tuple $(y_{1},x_{2},...,x_{L})$
has $m-1$ (an even number) of dominant type $D$ terms, one can check that
the eligibility requirement for this tuple coincides with (\ref{Elig1}), the
eligibility requirement for $(x_{1},x_{2},...,x_{L})$. This shows there is
no loss of generality in assuming that if the tuple admits only terms of
dominant type either $B$ or $D,$ then we can assume the number of terms of
dominant type $D$ is even.

Next, suppose $x_{1},...,x_{m}$ is of dominant type $D,$ $%
x_{m+1},...,x_{L-1} $ is of dominant type $B$ and $x_{L}$ is of dominant
type $BD$. Using the same notation as above, if $m$ is even, eligibility
requires 
\begin{equation*}
\sum_{i=1}^{m}2v_{i}+2v_{L}+\sum_{i=m+1}^{L-1}2u_{i}\leq 2n(L-1)+m.
\end{equation*}%
If we simply re-name $x_{L}$ as dominant type $D,$ then we are in the
situation of an odd number of dominant type $D$ (namely, $m+1$), and the
eligibility requirement for the case of a tuple with $m+1$ terms of dominant
type $D$ and $L-m-1$ terms of dominant type $B$ is satisfied.

Similarly, if $m$ is odd, eligibility requires 
\begin{equation*}
\sum_{i=1}^{m}2v_{i}+\sum_{i=m+1}^{L}2u_{i}\leq 2n(L-1)+m-1,
\end{equation*}%
and this is precisely the eligibility requirement if we, instead, view $%
x_{L} $ as dominant type $B$. Thus the case of one $x_{j}$ of dominant type $%
BD$ can be reduced to the case where all $x_{j}$ are of dominant type $B$ or 
$D$.

These observations reduce the problem to establishing the absolute
continuity of eligible tuples satisfying either the hypothesis of Lemma \ref%
{MainLemma}(a) or (b), and hence that Lemma completes the proof.
\end{proof}

\subsection{Applications}

Here are some consequences of our characterization theorem.

\begin{corollary}
(a) The product of conjugacy classes, $\tprod_{i=1}^{L}C_{x_{i}},$ has
positive measure (or non-empty interior) if and only if $(x_{1},...,x_{L})$
is eligible.

(b) If $x_{j}$ and $y_{j}$ are of the same dominant type and $%
S_{x_{j}}=S_{y_{j}}$ for all $j$, then $\mu _{x_{1}}\ast \cdot \cdot \cdot
\ast \mu _{x_{L}}$ is absolutely continuous if and only if $\mu _{y_{1}}\ast
\cdot \cdot \cdot \ast \mu _{y_{L}}$ is absolutely continuous.
\end{corollary}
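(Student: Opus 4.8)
The plan is to deduce both parts directly from Theorem \ref{main} together with Proposition \ref{charabscont}, so that the entire argument reduces to the combinatorial content of Definition \ref{eligible}. No genuinely new analysis should be required.

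For part (a), I would simply chain the two characterizations already in hand. Proposition \ref{charabscont} asserts that absolute continuity of $\mu_{x_1}\ast\cdots\ast\mu_{x_L}$ is equivalent to each of the geometric statements ``$\prod_{i=1}^{L}C_{x_i}$ has non-empty interior'' and ``$\prod_{i=1}^{L}C_{x_i}$ has positive measure''. Theorem \ref{main} asserts that this absolute continuity is in turn equivalent to eligibility of $(x_1,\dots,x_L)$. Composing these equivalences yields that $\prod_{i=1}^{L}C_{x_i}$ has positive measure, equivalently non-empty interior, if and only if the tuple is eligible.

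For part (b), the key observation is that eligibility, as formulated in Definition \ref{eligible}, is a function only of the dominant types of the entries and of the associated integers $S_{x_j}$ (together with the components $S_{x_j}^{(1)},S_{x_j}^{(2)}$ when an entry is of dominant type $BD$). Concretely, I would run through the four clauses: the parity $p$ is determined by the number of entries of dominant type $D$; the hypotheses of clauses (i)--(iv) concerning how many entries are of dominant type $S$ or $BD$ depend only on the list of dominant types; and the inequalities appearing in (ii), (iii), (iv) involve only the integers $S_{x_j}$, and in (iii) the component $S_{x_1}^{(p)}$.

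Thus, under the hypothesis that $x_j$ and $y_j$ share a dominant type and satisfy $S_{x_j}=S_{y_j}$ for every $j$ --- where for entries of dominant type $BD$ the equality $S_{x_j}=S_{y_j}$ is read as equality of the pairs $(S_{x_j}^{(1)},S_{x_j}^{(2)})$, so that in particular $S_{x_j}^{(p)}=S_{y_j}^{(p)}$ --- the two tuples fall under precisely the same clause of Definition \ref{eligible} and satisfy the same inequality. Hence $(x_1,\dots,x_L)$ is eligible if and only if $(y_1,\dots,y_L)$ is eligible, and applying Theorem \ref{main} to each tuple gives the desired equivalence of absolute continuity. The only point requiring care, and the mildest possible obstacle here, is the bookkeeping for type $BD$ entries in clause (iii): one must confirm that the relevant component $S_{x_1}^{(p)}$ is preserved, which is immediate once the hypothesis $S_{x_j}=S_{y_j}$ is understood as equality of the pairs.
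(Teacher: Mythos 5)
Your proposal is correct and takes essentially the same approach as the paper: part (a) by composing the equivalences of Proposition \ref{charabscont} with Theorem \ref{main}, and part (b) by observing that eligibility in Definition \ref{eligible} depends only on the dominant types and the values $S_{x_j}$ (read as pairs in the $BD$ case) and then applying Theorem \ref{main} to both tuples. Your explicit bookkeeping for the $BD$ clause is a detail the paper's one-line proof leaves implicit, but it is exactly the observation that proof relies on.
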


\begin{proof}
These statements follow from our Theorem \ref{main}, together with
Proposition \ref{charabscont} in the case of (a) and the observation that $%
(x_{1},...,x_{L})$ is eligible if and only if $(y_{1},...,y_{L})$ is
eligible in (b).
\end{proof}

\begin{example}
Suppose each $x_{j}$ is of dominant type $D$ and $S_{x_{j}}=2v_{j}$. Take $%
y_{j}$ of type $D_{v_{j}}$. Then $(x_{1},...,x_{L})$ is absolutely
continuous if and only if $(y_{1},...,y_{L})$ is absolutely continuous. This
is despite the fact that $\Phi _{y_{j}}$ is a proper subset of $\Phi
_{x_{j}} $ (unless $x_{j}$ is conjugate to $y_{j}),$ so $\mathcal{N}%
_{x_{j}}\subsetneq N_{y_{j}}$ and $\dim C_{x_{j}}<\dim C_{y_{j}}$.
\end{example}

Since it is always the case that $\Phi _{X_{x}}^{\mathfrak{g}}\subseteq \Phi
_{x},$ if $(x_{1},...,x_{L})$ is absolutely continuous in $G,$ then $%
(X_{x_{1}},...,X_{x_{L}})$ is absolutely continuous in $\mathfrak{g}$. The
converse is true when the Lie group $G$ is Lie type $A_{n}$, but not when $G$
is Lie type $B_{n}$. For example, if $x$ is type $D_{n}$ in a Lie group of
type $B_{n}$, then $X_{x}$ is type $SU(n)$. Hence $(X_{x},X_{x})$ is
absolutely continuous for all choices of $n$. But $(x,x)$ is never eligible,
not even when $n=2$. Indeed, it was shown in \cite{GAFA} that if $x$ is type 
$D_{n}$ in $B_{n},$ then $\mu _{x}^{2n}$ is absolutely continuous and $\mu
_{x}^{2n-1}$ is purely singular. In fact, more can be said.

\begin{corollary}
Suppose $x_{j}\in G_{n}\diagdown Z(G_{n})$ for $j=1,...,L$.

(a) Suppose $L=2n-1$. The convolution $\mu _{x_{1}}\ast \cdot \cdot \cdot
\ast \mu _{x_{L}}$ is not absolutely continuous with respect to Haar measure
on $G_{n}$ if and only if all $x_{j}$ are type $D_{n}$.

(b) If $L=2n,$ then $\mu _{x_{1}}\ast \cdot \cdot \cdot \ast \mu _{x_{L}}$
is absolutely continuous and $\tprod_{i=1}^{2n}C_{x_{i}}$ has non-empty
interior.
\end{corollary}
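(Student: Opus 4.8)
The plan is to push everything through the eligibility characterization of Theorem~\ref{main}, so that both parts reduce to purely combinatorial statements about the numbers $S_{x_j}$ and the parity $p$ of Definition~\ref{eligible}. The one structural input I would isolate first is that, among non-central torus elements, $S_x$ is maximized exactly by the elements of type $D_n$. Indeed, dominant type $B$ forces $S_x=2u+1\le 2n-1$ (the case $u=n$ being the identity), dominant type $D$ gives $S_x=2v\le 2n$ with equality precisely when $v=n$, while types $S$ and $BD$ give $S_x\le n$ and $S^{(1)}_x\le n+1$ respectively. Hence every non-central $x_j$ satisfies $S_{x_j}\le 2n$, with equality if and only if $x_j$ is of type $D_n$.

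For part (a) I would first dispose of the configurations containing a term of dominant type $S$ or $BD$. If two or more are of type $S$ or $BD$, the tuple is eligible by Definition~\ref{eligible}(i). If exactly one is of type $S$ (and none of type $BD$), the left side of the inequality in (ii) is at most $n+(L-1)2n=4n^2-3n$, which is $\le(2n+1)(L-1)=4n^2-2n-2$ for $n\ge 2$; the single-$BD$ case is handled identically via (iii), being careful that attaining $S^{(p)}_x=S^{(1)}_x=n+1$ forces $p=1$ and so forbids all remaining $L-1$ terms from being $D_n$. In each such case the tuple is eligible and not all $x_j$ are of type $D_n$, consistent with the claim.

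The heart of the matter is the remaining case, where every $x_j$ is of dominant type $B$ or $D$ and Definition~\ref{eligible}(iv) applies. Here I would set $\delta_j=2n-S_{x_j}\ge 0$, so that $\delta_j=0$ exactly for the $D_n$ terms, $\delta_j$ is odd for type $B$ and even for type $D$. Writing $\sum_j S_{x_j}=2nL-\sum_j\delta_j$ with $L=2n-1$, condition (iv) becomes $\sum_j\delta_j\ge 3-p$. A parity count shows $\sum_j\delta_j$ has the parity of the number $b$ of type-$B$ terms, and $b\equiv 1+d\pmod 2$ where $d$ is the number of type-$D$ terms governing $p$; thus $\sum_j\delta_j$ is odd when $p=2$ and even when $p=1$. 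Now $\sum_j\delta_j=0$ happens precisely when all $x_j$ are of type $D_n$ (forcing $p=1$ and violating $\sum_j\delta_j\ge 2$), so the tuple is not eligible; conversely, if not all $x_j$ are $D_n$ then $\sum_j\delta_j>0$, and the parity constraint upgrades this to $\sum_j\delta_j\ge 1$ when $p=2$ and $\sum_j\delta_j\ge 2$ when $p=1$, i.e.\ exactly $\sum_j\delta_j\ge 3-p$. Combining the cases with Theorem~\ref{main} proves (a).

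For part (b) the identical deficiency computation with $L=2n$ turns condition (iv) into $\sum_j\delta_j\ge 2-p$: this is automatic when $p=2$, and when $p=1$ the number $b=2n-d$ of type-$B$ terms is odd, forcing $\sum_j\delta_j\ge 1$, so it always holds. The $S$- and $BD$-configurations are eligible by the same (now slacker) bounds as in part (a). Hence every such tuple is eligible, and Theorem~\ref{main} yields absolute continuity, with the non-empty interior of $\prod_{i=1}^{2n}C_{x_i}$ following from Proposition~\ref{charabscont}(i). I expect the main obstacle to be exactly the parity bookkeeping in case (iv): the eligibility inequality is tight at the all-$D_n$ tuple, so one must exploit the parity of $\sum_j\delta_j$, not merely its positivity, both to make the dichotomy in (a) sharp and to close the last unit of slack in (b).
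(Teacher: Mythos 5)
Your proposal is correct and takes essentially the same approach as the paper: both reduce the corollary to the eligibility criterion of Theorem \ref{main} and then verify eligibility combinatorially, using that $S_{x}\leq 2n$ for non-central $x$ with equality only for type $D_{n}$ (and $S_{x}\leq 2n-1$ off type $D_{n}$), together with the parity allowance in Definition \ref{eligible}. Your deficiency-and-parity bookkeeping via $\delta_{j}=2n-S_{x_{j}}$ is simply a more systematic write-up of the paper's terse ``one can easily check'' verification.
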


\begin{proof}
(a) One can easily check that if $x\notin Z(G_{n})$ is not type $D_{n},$
then $S_{x}\leq 2n-1,$ with equality only if $x$ is type $B_{n-1},$ and that
for any $x\notin Z(G_{n})$, $S_{x}\leq 2n,$ with equality only if $x$ is
type $D_{n}$.

Suppose $x_{1}$ is not type $D_{n}$. If either $S_{x_{1}}<2n-1$ or $%
S_{x_{j}}<2n$ for some $j>1,$ then 
\begin{equation*}
\sum_{i=1}^{2n-1}S_{x_{i}}\leq (2n-1)+(2n-2)(2n)-1=(2n+1)(2n-2).
\end{equation*}%
Thus $(x_{1},...,x_{2n-1})$ is eligible and therefore $\mu _{x_{1}}\ast
\cdot \cdot \cdot \ast \mu _{x_{2n-1}}$ is absolutely continuous.

So assume $S_{x_{1}}=2n-1$ and all $S_{x_{j}}=2n$ for $j>1$. That means $%
x_{1}$ is of dominant type $B$ and $x_{2},...,x_{2n-1}$ are of dominant type 
$D$. Consequently, there are an even number of dominant type $D$. That means 
$(x_{1},...,x_{2n-1})$ is eligible provided $\sum_{i=1}^{2n-1}S_{x_{i}}\leq
(2n+1)(2n-2)+1$, and that is obviously true.

(b) One can similarly check that any $2n$-tuple of elements $x_{j}\notin
Z(G_{n})$ in $G_{n}$ is eligible.
\end{proof}

\begin{remark}
We note that (b) also follows from the fact that for all $x\notin Z(G_{n})$, 
$\mu _{x}^{2n}\in L^{2},$ as shown in \cite{GAFA}.
\end{remark}

\begin{corollary}
If $(x_{1},...,x_{L})$ is eligible, then there are matrices $g_{i}$ such
that $\tprod_{i=1}^{L}g_{i}x_{i}g_{i}^{-1}$ has distinct eigenvalues.
\end{corollary}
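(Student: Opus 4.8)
The plan is to deduce this directly from the characterization theorem and the measure-theoretic reformulation in Proposition \ref{charabscont}, together with the standard fact that the elements of $SO(2n+1)$ with repeated eigenvalues form a null set. First I would note that, by Theorem \ref{main}, the eligibility of $(x_1,\dots,x_L)$ implies that $\mu_{x_1}\ast\cdots\ast\mu_{x_L}$ is absolutely continuous, and hence by Proposition \ref{charabscont}(ii) the product set $\prod_{i=1}^{L}C_{x_i}$ has positive Haar measure in $G_n$.

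Next I would pin down the set of elements with distinct eigenvalues. Each $g\in SO(2n+1)$ has the eigenvalue $1$ together with $n$ conjugate pairs $e^{\pm i\theta_j}$, and the $2n+1$ eigenvalues are distinct precisely when the $\theta_j$ are distinct and lie strictly between $0$ and $\pi$; in the terminology of the paper this is exactly the condition that $g$ be regular, i.e.\ $\Phi_g=\varnothing$. The non-regular elements are those lying on some wall $\{\,\alpha_G(g)=1\,\}$, $\alpha\in\Phi$; each wall meets the torus in a lower-dimensional submanifold, so after conjugating and integrating (for instance via the Weyl integration formula) the union of the singular conjugacy classes is a closed set of Haar measure zero.

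Finally, since $\prod_{i=1}^{L}C_{x_i}$ has positive measure while the singular locus is null, the product set must contain a regular element. By definition of the product of conjugacy classes, any such element has the form $\prod_{i=1}^{L}g_i x_i g_i^{-1}$ with $g_i\in G_n$, and regularity says precisely that its $2n+1$ eigenvalues are distinct, which is what we want. I do not expect a genuine obstacle here, since all of the substantive work is already contained in Theorem \ref{main}; the only point needing care is to be explicit that ``distinct eigenvalues'' coincides with regularity in $SO(2n+1)$ and that the cases of $1$ occurring with multiplicity $\geq 3$ or $-1$ with multiplicity $\geq 2$ are correctly excluded on the full-measure regular set.
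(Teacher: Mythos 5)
Your proof is correct and follows essentially the same route as the paper: invoke Theorem \ref{main} together with Proposition \ref{charabscont} to conclude the product $\prod_{i=1}^{L}C_{x_i}$ is large, then observe that the singular (non-regular) elements form a negligible set, so the product must contain a regular element, i.e.\ one with distinct eigenvalues. The only cosmetic difference is that the paper uses non-empty interior plus density of regular elements, while you use positive measure plus nullity of the singular locus; these are interchangeable here since Proposition \ref{charabscont} gives both formulations.
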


\begin{proof}
This is due to the fact that a set in $G$ with non-empty interior must
contain elements with distinct eigenvalues.
\end{proof}

It is an open problem to characterize the tuples $(x_{1},...,x_{L})$ such
that $\tprod_{i=1}^{2n}C_{x_{i}}$ admits an element with distinct
eigenvalues.

\subsection{Types $C_{n}$ and $D_{n}$}

We conclude with some comments on extending this characterization to the
other classical Lie algebras, types $C_{n}$ and $D_{n}$. Every element of
the torus of one of these Lie groups is conjugate to an element of the form

\begin{equation}
x=(\underbrace{1,...,1}_{u},\underbrace{-1,...,-1}_{v},\underbrace{%
e^{ia_{1}},...,e^{ia_{1}}}_{s_{1}},...,\underbrace{e^{(\pm
)ia_{m}},...,e^{(\pm )ia_{m}}}_{s_{m}})  \label{CD}
\end{equation}%
where the $0<a_{j}<\pi $ are distinct, $u\geq v,$ $s_{1}=\max $ $s_{j}$ and $%
u+v+s_{1}+\cdot \cdot \cdot +s_{m}=n$. The minus sign is only needed for $%
D_{n}$. Such an $x$ will be said to be of type 
\begin{equation*}
C_{u}\times C_{v}\times SU(s_{1})\times \cdot \cdot \cdot \times SU(s_{m})%
\text{ or }D_{u}\times D_{v}\times SU(s_{1})\times \cdot \cdot \cdot \times
SU(s_{m})\text{ }
\end{equation*}
as these are the Lie types of their sets of annihilating roots.

We will say $x$ is of \textbf{dominant type }$C$ (or $D)$ if $2u\geq s_{1}$
and then set $S_{x}=2u,$ and say $x$ is of \textbf{dominant type} $S$
otherwise and set $S_{x}=s_{1}$. We will say that $(x_{1},...,x_{L})$ is 
\textbf{eligible} if%
\begin{equation*}
\sum_{i=1}^{L}S_{x_{i}}\leq 2n(L-1).
\end{equation*}%
One can again show that eligibility is a necessary condition for absolute
continuity.

We will say the pair $(x,y)$ in $C_{n}$ or $D_{n}$ is \textbf{exceptional}
if:

(i) $x$ is type $C_{n/2}\times C_{n/2}$ (or $D_{n/2}\times D_{n/2}$) ($n$
even) and $y$ is the same type as $x$ or type $SU(n)$;

(ii) $x$ is type $C_{n/2}\times C_{n/2}$ and $y$ is type $C_{n/2}\times
C_{n/2-1}$ ($n$ even) or $x$ is type $C_{(n+1)/2}\times C_{(n-1)/2}$ and $y$
is type $C_{(n-1)/2}\times C_{(n-1)/2}$ ($n$ odd);

(iii) (in $D_{n}$ only) $x$ is type $SU(n)$ and $y$ is type $SU(n)$ or $%
SU(n-1).$

The pairs of (i) are eligible, but their reductions (defined in a similar
manner as in $B_{n}$) are not. In fact, it can be seen from \cite[Thm. 9.1]%
{Adv} that the pair of type $(C_{n/2}\times C_{n/2},C_{n/2}\times C_{n/2})$
is not absolutely continuous (although it is eligible). The pairs of (ii)
are entwined under the reduction process and it is unclear whether their
base case, $(C_{2}\times C_{1},C_{1}\times C_{1})$ in $C_{3},$ is an
absolutely continuous pair. The pairs in (iii) coincide with exceptional
pairs from the Lie algebra problem, whose absolute continuity was
undetermined in \cite{CJM}.

All non-exceptional, eligible tuples reduce to non-exceptional, eligible
tuples.

\begin{conjecture}
Suppose the non-central elements $x_{j}$ belong to the torus of $C_{n}$ or $%
D_{n}$ for $n\geq 5$ and that $(x_{1},...,x_{L})$ is not exceptional. Then $%
(x_{1},...,x_{L})$ is absolutely continuous if and only if it is eligible.
\end{conjecture}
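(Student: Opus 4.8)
The plan is to follow, essentially line by line, the architecture of the $B_{n}$ argument above, since the genuinely new behaviour is confined to the exceptional pairs, which the hypothesis removes. Necessity of eligibility is proved exactly as in Proposition \ref{nec}: for each $x_{j}$ choose a conjugate $y_{j}$ and let $V_{j}$ be its eigenspace realizing the largest multiplicity $S_{x_{j}}$; the count $\dim\bigcap_{j}V_{j}\geq\sum_{j}S_{x_{j}}-(L-1)2n$ forces a common eigenvector with a fixed eigenvalue once eligibility fails, which is impossible for a subset of $SO(2n)$ or $Sp(n)$ with non-empty interior. The ambient dimension is now $2n$ rather than $2n+1$, and this is exactly why eligibility reads $\sum_{i}S_{x_{i}}\leq 2n(L-1)$ with no parity correction.

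For sufficiency I would induct on the rank $n$ via the general strategy, Proposition \ref{generalstrategy}, whose proof rests only on Proposition \ref{charabscont} and so transfers verbatim. I would take $\Omega=\{RE_{\alpha},IE_{\alpha}:\alpha\in\Phi_{n}^{+}\diagdown\Phi_{n-1}^{+}\}$, namely the vectors attached to the roots through $e_{1}$ --- $\{e_{1}\pm e_{j},2e_{1}\}$ in type $C$ and $\{e_{1}\pm e_{j}\}$ in type $D$ --- with reduction $x\mapsto x'$ peeling one coordinate off the largest eigenspace; that eligibility is preserved under reduction is shown as in Proposition \ref{reductioneligible}, modulo the exceptional pairs treated below. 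The complement of $sp(\mathfrak{g}_{n-1},\Omega)$ in $\mathfrak{g}_{n}$ is again the one-dimensional torus direction spanned by $[RE_{e_{1}+e_{n}},IE_{e_{1}+e_{n}}]$. Whenever two or more $x_{j}$ realize the maximal eigenvalue off $\pm 1$ (the dominant type $S$ situation), each $\mu_{x_{j}}^{2}\in L^{2}$ by \cite[Thm. 9.1]{Adv}, so Plancherel and H\"older give an $L^{2}$ density directly, exactly as in Lemma \ref{1} and its Corollary; this disposes of all but the case where at most one factor is of dominant type $S$.

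The substantive step, paralleling Lemma \ref{MainLemma}, treats tuples in which all but at most one $x_{j}$ are of dominant type $C$ or $D$. Writing each $\Omega_{x_{i}}$ through an index set $J_{i}\subseteq\{2,\dots,n\}$ and using Weyl conjugation in $G_{n-1}$ to make the $J_{i}$ as disjoint as possible, property (i) of the general strategy is immediate once $\bigcup_{i}J_{i}$ exhausts $\{2,\dots,n\}$. The remaining missing pairs $FE_{e_{1}\pm e_{j}}$ are recovered just as above: one flows surplus copies of the $e_{1}$-root vectors by $Ad(\exp tRE_{e_{j}})$, and for the residual indices one invokes Lemma \ref{elem} with a torus direction $H_{0}=\sum RE_{e_{k}+e_{\ell}}\in\mathfrak{g}_{n-1}$ that rotates $e_{1}+e_{k}$ vectors into $e_{1}-e_{\ell}$ vectors; taking $M=RE_{e_{1}+e_{n}}$ then fills the one-dimensional complement. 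The details of this bookkeeping must be redone for the $C$ and $D$ root data --- in particular the absence of a short root $e_{1}$ and, in type $C$, the presence of $2e_{1}$ change which surplus vectors are available --- but the two robust tools, the general strategy (Proposition \ref{generalstrategy}) and Lemma \ref{elem}, carry the argument. The all-$S$ subcase again reduces to the Lie algebra computation of \cite{CJM}, since there $\Omega_{X_{i}}^{\mathfrak{g}}=\Omega_{x_{i}}$.

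The hard part, and the reason this remains only a conjecture, is that the induction collides with the exceptional family. Three obstructions appear. The pairs of type (i), e.g. $(C_{n/2}\times C_{n/2},C_{n/2}\times C_{n/2})$, are eligible but reduce to non-eligible pairs, and \cite[Thm. 9.1]{Adv} shows they are not even absolutely continuous, so there eligibility genuinely fails to characterize absolute continuity; they must be excluded. The pairs of type (ii) are \emph{entwined}, reduction carrying one to another of the same family, so the descent never reaches a settled base case and the decisive one, $(C_{2}\times C_{1},C_{1}\times C_{1})$ in $C_{3}$, is itself open. The type (iii) pairs in $D_{n}$ descend to the $SU(n)$ pairs left undecided in \cite{CJM}. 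The hypotheses $n\geq 5$ and non-exceptionality are chosen precisely so that, as asserted before the conjecture, reduction keeps a tuple non-exceptional and eligible; granting that closure, the induction above should close. The crux that remains is therefore twofold: to prove the closure statement carefully, and to settle the small-rank non-exceptional base cases directly --- equivalently, to verify that the descent of a non-exceptional tuple never bottoms out in one of the open exceptional pairs above.
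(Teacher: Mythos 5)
This statement is the paper's closing \emph{conjecture}: the paper offers no proof of it, only a strategy discussion and a list of obstructions, so there is no ``paper proof'' to compare against. Your proposal essentially reproduces that same strategy (rank induction via Proposition \ref{generalstrategy} and Lemma \ref{elem}, exclusion of the exceptional pairs, reduction of the all-$S$ case to the Lie algebra computation of \cite{CJM}) and honestly concedes the same open points --- closure of non-exceptionality and eligibility under reduction, and the unsettled small-rank base cases. As such it is a plan, not a proof, and in that respect it accurately matches the status of the statement.

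However, one step in your outline would genuinely fail, and it is precisely the step the paper warns about. You dispose of tuples containing two or more dominant type $S$ factors by claiming each such $\mu _{x_{j}}^{2}\in L^{2}$ via \cite[Thm. 9.1]{Adv}, ``exactly as in Lemma \ref{1}.'' This is false in type $D_{n}$: the paper states that for every $n$ and $x$ of type $SU(n)$ in $D_{n}$, $\mu _{x}^{2}$ is not even absolutely continuous --- this is the source of the type (iii) exceptional pairs and is exactly why, in the paper's words, one ``cannot immediately reduce to the case of at most one $x_{j}$ of type $S$, as we did for $B_{n}$''; a modification of the Lie algebra approach of \cite{CJM} is required instead. (A related failure occurs in $C_{n}$: for $x$ of type $C_{2}\times C_{1}$ in $C_{3}$, $\mu _{x}^{3}$ is not absolutely continuous.) Two further mismatches with the paper's intended route are worth noting: the paper expects the induction step to be \emph{easier} than in $B_{n}$, transferring the Lie algebra arguments directly because $\mathcal{N}_{X_{x}}^{\mathfrak{g}_{n}}\diagdown \mathcal{N}_{X_{x^{\prime }}}^{\mathfrak{g}_{n-1}}=\mathcal{N}_{x}\diagdown \mathcal{N}_{x^{\prime }}$, rather than redoing the root-vector bookkeeping for the $C$ and $D$ data as you propose; and the base-case problem is worse than you suggest, since the paper anticipates that $C_{3}$, $C_{4}$ and $D_{4}$ contain \emph{additional} exceptional tuples beyond types (i)--(iii), so that new methods, not just careful descent-tracking, are expected to be needed there.
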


It is natural to appeal to an induction argument on the rank of the Lie
group, as we did for type $B_{n}$. But for the Lie groups of types $C_{n}$
and $D_{n},$ the induction step will be much easier than it was for $B_{n}$
as it will follow from the arguments given for the proof in the Lie algebras
setting in \cite{CJM}. This is essentially because the eligibility condition
is the same for $(x_{1},...,x_{L})$ and $(X_{x_{1}},...,X_{x_{L}}),$ and
even though it need not be true that $\Phi _{X_{x}}^{\mathfrak{g}_{n}}=\Phi
_{x}$ for $\mathfrak{g}_{n}$ the Lie algebra of type $C_{n}$ or $D_{n},$ it
is true that $\mathcal{N}_{X_{x}}^{\mathfrak{g}_{n}}\diagdown \mathcal{N}%
_{X_{x^{\prime }}}^{\mathfrak{g}_{n-1}}=\mathcal{N}_{x}\diagdown \mathcal{N}%
_{x^{\prime }}$.

However, establishing the base cases will be onerous with our current
techniques. For $C_{n}$, this task is complicated by the fact that when $x$
is type $C_{2}\times C_{1}$ in the group $C_{3},$ then $\mu _{x}^{3}$ is not
absolutely continuous. In $D_{4}$ we have the same issue with $x$ of type $%
SU(4)$. In fact, for all $D_{n}$ and $x$ of type $SU(n),$ $\mu _{x}^{2}$ is
not absolutely continuous and that means we cannot immediately reduce to the
case of at most one $x_{j}$ of type $S,$ as we did for $B_{n}$. This latter
problem arose in the Lie algebra setting, as well, and a modification of the
approach taken there will work here.

For $C_{3},$ $C_{4}$ and $D_{4},$ it appears there will be other exceptional
tuples and that new methods will be needed to determine which of these, as
well as which of the exceptional tuples listed above, are absolutely
continuous.

\end{document}